\newcolumntype{C}[1]{>{\centering\arraybackslash$}p{#1}<{$}}
\newcolumntype{L}[1]{>{\arraybackslash$}p{#1}<{$}}
\newcommand{\fS}{{\mathfrak S}}
\newcommand{\GL}{\mathrm{GL}}
\newcommand{\Sp}{\mathrm{Sp}}
\newcommand{\ep}{\epsilon}
\newcommand{\ltd}{\lessdot} 
\newcommand{\gtd}{\gtrdot}
\newcommand{\K}{K} 
\newcommand{\fQ}{F} 
\newtheorem{theorem}{Theorem}[section]
\newtheorem{corollary}[theorem]{Corollary}
\newtheorem{lemma}[theorem]{Lemma}
\newtheorem{proposition}[theorem]{Proposition}
\theoremstyle{definition} 
\newtheorem{definition}[theorem]{Definition}
\newtheorem{example}[theorem]{Example}
\newcommand{\Dfn}[1]{\emph{#1}}
\def\totop#1{%
  \vtop{\vskip0pt\hbox{$#1$}}%
}
\DeclareMathOperator{\Des}{Des}
\DeclareMathOperator{\ch}{ch}
\DeclareMathOperator{\sh}{sh}
\DeclareMathOperator{\Sun}{Sun}
\DeclareMathOperator{\RS}{RS}
\DeclareMathOperator{\Ber}{Ber}
\DeclareMathOperator{\Rob}{Rob}
\DeclareMathOperator{\rev}{rev}
\DeclareMathOperator{\Osc}{Osc}
\DeclareMathOperator{\SYT}{SYT}
\DeclareMathOperator{\SSYT}{SSYT}
\DeclareMathOperator{\wt}{wt}
\DeclareMathOperator{\Tr}{Tr}
\newcommand{\wgtCn}{\mu}
\newcommand{\wgtSr}{\lambda}
\newcommand{\cT}{\mathcal{T}}
\newcommand{\dotcup}{\ensuremath{\mathaccent\cdot\cup}}
\title{Descent sets for symplectic groups} \author{Martin Rubey}
\address{Fakult\"at f\"ur Mathematik und Geoinformation, TU Wien,
  Austria}%
\email{martin.rubey@math.uni-hannover.de}%
\author{Bruce E. Sagan}%
\address{Department of Mathematics, Michigan State University, East
  Lansing, MI 48824-1027, USA}%
\email{sagan@math.msu.edu}%
\author{Bruce W. Westbury}%
\address{Department of Mathematics, University of Warwick, Coventry,
  CV4 7AL}%
\email{Bruce.Westbury@warwick.ac.uk}%
\date{\today}%
\begin{document}

\begin{abstract}
  The descent set of an oscillating (or up-down) tableau is
  introduced.  This descent set plays the same role in the
  representation theory of the symplectic groups as the descent set
  of a standard tableau plays in the representation theory of the general
  linear groups.  In particular, we show that the descent set is
  preserved by Sundaram's correspondence.
 This gives a direct combinatorial interpretation of the branching
  rules for tensor products of the defining representation of the symplectic groups;
  equivalently, for the Frobenius character of the action of a
  symmetric group on an isotypic subspace in a tensor power of the
  defining representation of a symplectic group.  
\end{abstract}
\maketitle

\section{Introduction} 
In this paper we propose a new combinatorial approach to an open
problem in representation theory and solve one particular case.  The
problem can be formulated in very general terms: let $G$ be a
connected reductive complex algebraic group and let $V$ be a finite
dimensional rational representation of $G$.
Then, for $r\ge 0$, the group $G$ acts diagonally on the tensor power
$\otimes^rV$; moreover the symmetric group, $\fS_r$, acts by
permuting tensor coordinates.  Since these two actions commute we
have an action of $G\times \fS_r$ on $\otimes^rV$.  This
representation is completely reducible.  The problem is then to
describe the decomposition of $\otimes^rV$ into irreducible
components.  In particular one would like to determine the characters
of the isotypic components as representations of $\fS_r$.

There are two examples we consider: the defining representations of
$G=\GL(n)$, a general linear group, and $G=\Sp(2n)$, a symplectic
group.  The results for the general linear groups are well known,
although not from the point of view taken here, while the results for
the symplectic groups are new.

Our approach to this problem builds on the combinatorial theory of
crystal graphs.  We introduce the notion of \Dfn{descent set} of a
highest weight vertex in a tensor power of a crystal graph.  When $V$
is the defining representation of a general linear group, the usual
descent set of a word is a descent set in this sense.  We then
exhibit a descent set for tensor products of the defining representation of a symplectic
group.  In particular, our main result, Theorem~\ref{thm:main-crystal} below, is obtained by showing that the Sundaram correspondence for oscillating tableaux preserves descent sets.

The rest of this article is structured as follows.  In the next section we will review our setup in the familiar case of the general linear group.  Section~\ref{sec:general} then considers any complex reductive algebraic group $G$ and contains our new definition of a descent function in this general setting.  Section~\ref{sec:oscillating} specializes again to the symplectic case and Section~\ref{sec:correspondences} gives the necessary background about the Sundaram and Berele correspondences.   The following section gives the proof of our main result by showing that  Sundaram's bijection preserves descent sets.    In the last section, we explore Roby's description of the Sundaram map using growth diagrams.  


\section{Robinson-Schensted and the general linear groups}

   
In this section we review some properties of the Robinson-Schensted
correspondence and its connection with Schur-Weyl duality.  For a
textbook treatment we refer to the books of Sagan~\cite[Chapter~3]{SymmetricGroup} or Stanley~\cite[Chapter~7]{EC2}.
Let $V$ be the defining representation of $\GL(n)$.  Then the result
due to Schur is that the decomposition of $\otimes^rV$ as a
representation of $\GL(n)\times \fS_r$ is given by
\begin{equation}
  \label{eq:Schur-Weyl}
  \otimes^rV \cong \bigoplus_{\substack{\mu\in
    P(r)\\\ell(\mu)\leq n}} V(\mu)\otimes S(\mu),  
\end{equation}
where $V(\mu)$ is an irreducible representation of $\GL(n)$ and
$S(\mu)$ is an irreducible representation of $\fS_r$. In the sum, $P(r)$ represents
the set of partitions  of $r$, and $\ell(\mu)$ is the number of parts of the partition $\mu$.

The Robinson-Schensted correspondence is a bijection
\begin{equation}
  \label{eq:RS}
  \RS: \{1,\dots,n\}^r \to \coprod_{\substack{\mu\in
      P(r)\\\ell(\mu)\leq n}} \SSYT(\mu,n)\times\SYT(\mu),
\end{equation}
where $\SSYT(\mu, n)$ is the set of semistandard Young tableaux of
shape $\mu$ with entries less than or equal to $n$, and $\SYT(\mu)$ is the set
of standard Young tableaux of shape $\mu$.  When $\RS(w) = (P, Q)$,
we call $P$ the insertion and $Q$ the recording tableau of $w$.

One relationship between equations~\eqref{eq:Schur-Weyl}
and~\eqref{eq:RS} is that the character of $V(\mu)$ is given by the
Schur polynomial associated to $\mu$,
\begin{equation}\label{eq:Schur-character}
  s_\mu(x_1,\dots,x_n) = \sum_{T\in\SSYT(\mu,n)} {\mathbf x}^T.
\end{equation}
so, in particular, $V(\mu)$ has a basis indexed by $\SSYT(\mu,n)$.
Also, there is a basis of $S(\mu)$ indexed by $\SYT(\mu)$.
Furthermore, the Robinson-Schensted correspondence is weight
preserving, so its existence constitutes a combinatorial proof of 
the character identity implied by equation~\eqref{eq:Schur-Weyl}.

Associated to any representation $\rho:\fS_r\to\GL(U)$ is its Frobenius
character 
\[
\ch\rho = \frac{1}{r!}\sum_{\pi\in\fS_r}\Tr\rho(\pi)
p_{\lambda(\pi)},
\]
where $\Tr$ denotes the trace, the partition $\lambda(\pi)$ is the
cycle type of the permutation $\pi$ and $p_{\lambda(\pi)}$ denotes
the associated power sum symmetric function.  By abuse of notation we
will write $\ch U$ instead of $\ch\rho$.  In particular, we have $\ch
S(\mu)=s_\mu$,
where $s_\mu$ is the Schur function associated to $\mu$.

To establish the connection with descent sets, we express the
Frobenius character of the isotypic component of type $\mu$ in equation~\eqref{eq:Schur-Weyl}
in terms of the
fundamental quasi-symmetric functions (see~\cite[Theorem~7.19.7]{EC2}):
\begin{equation}
  \label{eq:quasisymmetric}
  \ch S(\mu) =  s_{\mu} = \sum_{Q\in\SYT(\mu)} \fQ_{\Des(Q)}
\end{equation}
where, for any subset $D\subseteq\{1,2,\dots,r-1\}$, 
$$
F_D=\sum_{i_1\le i_2\le \dots \le i_r\atop j\in D \implies i_j<i_{j+1}\rule{0pt}{8pt}} x_{i_1}x_{i_2}\dots x_{i_r}.
$$

One of the starting points of this project was the remarkable fact,
due to Sch\"utzenberger~\cite[Remarque~2]{Schuetzenberger1963}, that
the usual descent set of a word equals the descent set of its recording tableau under the 
Robinson-Schensted correspondence.
Moreover, the correspondence restricts to a bijection between reverse
lattice permutations (also known as reverse Yamanouchi words) of
weight $\mu$ and the set $\SYT(\mu)$.

Thus, we can alternatively describe the Frobenius character as
\begin{equation}
  \label{eq:quasisymmetric-words}
  \ch S(\mu) =  \sum_{w} \fQ_{\Des(w)},
\end{equation}
where the summation is over all reverse lattice permutations of
weight $\mu$.  We provide symplectic analogues of equations~(\ref{eq:quasisymmetric}) and~(\ref{eq:quasisymmetric-words})
in Theorems~\ref{thm:main} and~\ref{thm:main-crystal}, respectively.


\section{The general setting}
\label{sec:general}


Let $V$ be a finite dimensional rational representation of a complex reductive
algebraic group $G$.  Let $\Lambda$ be the set of isomorphism classes
of irreducible rational representations of $G$ and let $V(\wgtCn)$ be
the representation corresponding to $\wgtCn\in\Lambda$.  For example,
when $G$ is the general linear group $\GL(n)$ we can identify
$\Lambda$ with the set of weakly decreasing sequences of integers of
length $n$.  Also, when $G$ is the symplectic group
$\Sp(2n)$ we can identify $\Lambda$ with the set of partitions with
at most $n$ parts.  In both cases, the trivial representation
corresponds to the empty partition and the defining representation
corresponds to the partition~$1$.

For the symmetric group, $\fS_r$, we
identify the isomorphism classes of its irreducible representations
with the set of partitions of $r$, $P(r)$.  We denote the representation corresponding to $\wgtSr\in
P(r)$ by $S(\wgtSr)$.

For each $r\ge 0$, the tensor power $\otimes^rV$ is a rational
representation of $G\times \fS_r$, where $G$ acts diagonally
and $\fS_r$ acts by permuting
tensor coordinates.  This representation is
completely reducible.  Decomposing it as a representation of $G$ we
obtain the following analogue of equation~\eqref{eq:Schur-Weyl}
\begin{equation}
  \label{eq:Berele}
  \otimes^rV \cong \bigoplus_{\wgtCn \in  \Lambda}
  V(\wgtCn)\otimes U(r,\wgtCn).
\end{equation}
The isotypic space $U(r,\wgtCn)$ inherits the action of $\fS_r$ and
therefore decomposes as
\begin{equation}
  \label{eq:Sundaram}
  U(r,\wgtCn) \cong \bigoplus_{\wgtSr\in P(r)} A(\wgtSr,\wgtCn)\otimes S(\wgtSr).
\end{equation}
Thus, the Frobenius character of $U(r,\wgtCn)$ is
\begin{equation}
  \label{eq:Frobenius}
  \ch U(r,\wgtCn) = \sum_{\wgtSr\in P(r)} a(\wgtSr,\wgtCn) s_\wgtSr,
\end{equation}
where $a(\wgtSr,\wgtCn)=\dim A(\wgtSr,\wgtCn)$.

When $V$ is the defining representation of $\GL(n)$ the coefficient
$a(\wgtSr,\wgtCn)$ equals $1$ for $\wgtSr=\wgtCn$ and vanishes
otherwise.  Thus, in this case the Frobenius character is simply
$s_\wgtCn$.  For the defining representation of the symplectic group
$\Sp(2n)$ the coefficients $a(\wgtSr, \wgtCn)$ were determined by
Sundaram~\cite{MR2941115} and Tokuyama~\cite{MR933441}.  In general
it is a difficult problem to determine these characters explicitly.

We would like to advertise a new approach to this problem, using
descent sets.  It appears that the proper setting for this approach
is the combinatorial theory of crystal graphs.  This theory,
introduced by Kashiwara, is an off-shoot of the representation theory
of Drinfeld-Jimbo quantised enveloping algebras.  For a textbook
treatment we refer to the book by Hong and Kang~\cite{MR1881971}.

For each rational representation $V$ of a connected reductive
algebraic group there is a crystal graph.  The vector space $V$ is
replaced by a set of cardinality $\dim(V)$.  The raising and lowering
operators, which are certain linear operators on $V$, are replaced by
partial functions on the set.  It is common practice to represent
these partial functions by directed graphs.  Each arc is coloured
according to the application of the Kashiwara operator it represents.

For example, the crystal graph corresponding to the defining
representation of $\GL(n)$ is
\begin{equation}
  \label{eq:crystal-GL}
  1\stackrel{1}{\longrightarrow}2\stackrel{2}{\longrightarrow} \dots
  \stackrel{n-2}{\longrightarrow}n-1\stackrel{n-1}{\longrightarrow} n.
\end{equation}
while the crystal graph corresponding to the defining representation
of $\Sp(2n)$ is
\begin{equation}
  \label{eq:crystal-Sp}
  1\stackrel{1}{\longrightarrow}2\stackrel{2}{\longrightarrow} \dots
  \stackrel{n-1}{\longrightarrow} n
  \stackrel{n}{\longrightarrow}
  {-n}\stackrel{n-1}{\longrightarrow}{-(n-1)}\stackrel{n-2}{\longrightarrow}
  \dots
\stackrel{1}{\longrightarrow}{-1}.
\end{equation}

Each vertex of the crystal has a weight and the sum of these weights
is the character of the representation, e.g.,
equation~\eqref{eq:Schur-character} for the general linear group.
Isomorphic representations correspond to crystal graphs that are
isomorphic as coloured digraphs and the representation is
indecomposable if and only if the graph is connected.

A vertex in a crystal graph with no in-coming arcs is a highest
weight vertex.  Each connected component contains a unique highest
weight vertex.  The weight of this vertex is the weight of the
representation it corresponds to.  Thus, the components of a crystal
graph are in bijection with the set of highest weight vertices.  

There is a (relatively) simple way to construct the crystal graph of
a tensor product of two representations given their individual
crystal graphs.  In particular, the highest weight vertices of the
crystal corresponding to $\otimes^rV$ can be regarded as words of
length $r$ with letters being vertices of the crystal corresponding
to $V$.

For example, when $V$ is the defining representation of $\GL(n)$, the highest
weight words can be identified with reverse Yamanouchi words, see
Definition~\ref{def:symplectic-LR}.  When $V$ is the  defining representation
of $\Sp(2n)$ the highest weight words correspond to $n$-symplectic
oscillating tableaux, see Definition~\ref{def:oscillating}.

Thus we obtain a combinatorial interpretation of
equation~\eqref{eq:Berele} which is a far-reaching generalisation of
the Robinson-Schensted correspondence.  Explicit insertion schemes in
analogy to the classical correspondence were found corresponding to  the defining
representation for the symplectic groups $\Sp(2n)$ as well as for the
odd and even orthogonal groups, see~\cite{MR2269126}.

An important feature of these insertion schemes is that they can be
understood as isomorphisms of crystals.  In the example of the
defining representation of $\GL(n)$, the Robinson-Schensted
correspondence puts the highest weight words in bijection with
standard tableaux.  Moreover, the words in each component of the
crystal have the same recording tableau, and two words have the same
insertion tableau if and only if they occur in the same position of
two isomorphic components of the crystal graph.

Let us remark that there are other insertion schemes for the
classical groups, for example by Berele~\cite{MR867655} for the
symplectic groups, Okada~\cite{MR1132576} for the even orthogonal
groups and Sundaram~\cite{MR1041447} for the odd orthogonal groups.
However, these are not isomorphisms of crystals.

We can now state the fundamental property we require for a descent
set in the general sense.
\begin{definition}\label{def:descent-set}
  Suppose that the function $\Des$  assigns to each highest weight vertex of
  the crystal graph corresponding to $\otimes^rV$ a subset of
  $\{1,2,\dots,r-1\}$.  Then $\Des$ is a \Dfn{descent function} if it satisfies
  \[
  \ch U(r,\wgtCn) = \sum_{w} \fQ_{\Des(w)},
  \]
  where the sum is over all highest weight vertices of weight
  $\wgtCn$.
\end{definition}
Thus, by equation~\eqref{eq:quasisymmetric-words}, the usual descent
set of a word is a descent set in the sense of this definition for
the defining representation of $\GL(n)$.  We note that in terms of
the crystal graph~\eqref{eq:crystal-GL} above, a highest weight
vertex $w_1 w_2\dots w_r$ has a descent at position $k$ if and only
if there is a (nontrivial) directed path from $w_{k+1}$ to $w_k$ in
the crystal graph.  

In this article we exhibit a descent function with a similar
description for the symplectic groups, see
Definition~\ref{def:desoscword}.


\section{Oscillating tableaux and descents}
\label{sec:oscillating}


In the case of the defining representation of the symplectic group
$\Sp(2n)$ the vertices of the crystal graph corresponding to
$\otimes^rV$ are words $w=w_1w_2\dots w_r$ in $\{\pm 1,\dots,\pm
n\}^r$.  The weight of a vertex is the tuple
$\wt(w)=(\wgtCn_1,\dots,\wgtCn_n)$, where $\wgtCn_i$ is the number of
letters $i$ minus the number of letters $-i$ in $w$.  The vertex is a
highest weight vertex if for any $k\leq r$, the weight of
$w_1\dots w_k$ is a partition, i.e., $\wgtCn_1\geq\wgtCn_2
\dots\geq\wgtCn_n\geq0$.

\begin{definition}\label{def:desoscword}
  A highest weight vertex $w_1w_2\dots w_r$ in the crystal graph
  corresponding to $\otimes^rV$ has a \Dfn{descent} at position $k$
  if there is a (nontrivial) directed path from $w_k$ to $w_{k+1}$ in
  the crystal graph~\eqref{eq:crystal-Sp}.

  The \Dfn{descent set} of $w$ is
  \[
  \Des(w)=\{k\ | \text{ $k$ is a descent of $w$}\}.
  \]
Note that the direction of the path for these descents is opposite from the one used for $\GL(n)$.
\end{definition}

We can now state our main result.
\begin{theorem}\label{thm:main-crystal}
  Let $V$ be the defining representation of the symplectic group
  $\Sp(2n)$.  Then the Frobenius character of the isotypic component, $U(r,\wgtCn)$,
 of $\otimes^r V$ is
  \[
  \ch U(r,\wgtCn) = \sum_w \fQ_{\Des(w)},
  \]
  where the sum is over all highest weight vertices of weight $\mu$
  in the corresponding crystal graph.
\end{theorem}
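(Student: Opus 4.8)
The plan is to deduce the identity from the known Schur expansion of $\ch U(r,\wgtCn)$ together with the fact that Sundaram's correspondence transports the crystal descent set of Definition~\ref{def:desoscword} onto the descent set of a standard recording tableau. Combining equation~\eqref{eq:Frobenius} with the quasisymmetric expansion~\eqref{eq:quasisymmetric} applied to each $s_\wgtSr$ gives
\[
  \ch U(r,\wgtCn)=\sum_{\wgtSr\in P(r)} a(\wgtSr,\wgtCn)\, s_\wgtSr
  =\sum_{\wgtSr\in P(r)} a(\wgtSr,\wgtCn)\sum_{Q\in\SYT(\wgtSr)}\fQ_{\Des(Q)}.
\]
Thus it suffices to produce a bijection between the highest weight vertices $w$ of weight $\wgtCn$ and a collection of standard tableaux in which each $Q\in\SYT(\wgtSr)$ occurs with multiplicity $a(\wgtSr,\wgtCn)$, and under which $\Des(w)=\Des(Q)$.

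Sundaram's correspondence supplies exactly such a bijection. The highest weight vertices of weight $\wgtCn$ are the $n$-symplectic oscillating tableaux of length $r$ and final shape $\wgtCn$ (Definition~\ref{def:oscillating}), and Sundaram's map sends each such tableau to a pair $(Q,M)$, where $Q$ is a standard Young tableau of some shape $\wgtSr\vdash r$ and $M$ is a matching recording the deletions. For fixed $\wgtSr$ the number of admissible $M$ is precisely the multiplicity $a(\wgtSr,\wgtCn)$ of Sundaram and Tokuyama, so the multiset of recording tableaux obtained from all $w$ of weight $\wgtCn$ consists of $a(\wgtSr,\wgtCn)$ copies of each element of $\SYT(\wgtSr)$. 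This already reproduces the dimension count $\dim U(r,\wgtCn)=\sum_\wgtSr a(\wgtSr,\wgtCn)\,|\SYT(\wgtSr)|$ and reduces the theorem to the single assertion that $\Des(w)=\Des(Q)$ for the recording tableau $Q$ of $w$.

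Since the crystal~\eqref{eq:crystal-Sp} is a single directed chain
\[
  1\prec 2\prec\dots\prec n\prec -n\prec\dots\prec -1,
\]
Definition~\ref{def:desoscword} says that $k\in\Des(w)$ exactly when $w_k\prec w_{k+1}$ in this total order, whereas $k\in\Des(Q)$ exactly when $k+1$ lies in a strictly lower row of $Q$ than $k$. The heart of the proof is the lemma that these two conditions coincide, which is the symplectic analogue of Sch\"utzenberger's theorem for the Robinson--Schensted correspondence. I would prove it by analysing Sundaram's insertion step by step, tracking how the pair of consecutive letters $w_k,w_{k+1}$ governs the relative rows of the entries $k$ and $k+1$ in $Q$; Roby's growth-diagram reformulation, developed in the final section, makes this a purely local verification on a single cell of the diagram.

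The main obstacle is precisely this local analysis, because Sundaram's algorithm treats the two halves of the chain asymmetrically: positive letters are inserted while negative letters trigger a bumping deletion, so a descent may arise in any of the four sign regimes (both letters positive, both negative, or straddling the central step $n\prec -n$). One must check in each case that the direction of the shape change recorded at steps $k$ and $k+1$ reproduces the comparison $w_k\prec w_{k+1}$; the straddling and deletion cases, where the recording box for $k+1$ may be created by an outward bump rather than a direct insertion, are the delicate ones. Once the lemma is established, substituting $\Des(w)=\Des(Q)$ into the displayed expansion and summing over the bijection yields $\sum_w \fQ_{\Des(w)}=\ch U(r,\wgtCn)$, completing the proof.
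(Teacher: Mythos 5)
Your top-level reduction is exactly the one the paper uses: expand $\ch U(r,\wgtCn)$ via equations~\eqref{eq:Frobenius} and~\eqref{eq:quasisymmetric}, identify the highest weight vertices of weight $\wgtCn$ with $n$-symplectic oscillating tableaux, invoke Sundaram's bijection together with Theorem~\ref{thm:Sundaram-coefficients} so that each $Q\in\SYT(\wgtSr)$ occurs with multiplicity $a(\wgtSr,\wgtCn)$, and reduce everything to the single claim $\Des(w)=\Des(Q)$, which is the paper's Theorem~\ref{thm:descents}. Your restatement of Definition~\ref{def:desoscword} as $w_k\prec w_{k+1}$ in the order $1\prec\dots\prec n\prec -n\prec\dots\prec -1$ is correct. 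One imprecision: Sundaram's output is a pair $(Q,S)$ with $S$ an $n$-symplectic Littlewood--Richardson tableau of shape $\wgtSr/\wgtCn$, not a matching; the fixed-point-free involution is only an intermediate object, and the count $a(\wgtSr,\wgtCn)=\sum_\beta c_{\wgtCn,\beta}^{\wgtSr}(n)$ counts the tableaux $S$, not the involutions.

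The genuine gap is in the key lemma $\Des(w)=\Des(Q)$, which is the actual content of the theorem. Your plan --- a case analysis on the four sign regimes of $(w_k,w_{k+1})$, made ``purely local on a single cell'' by Roby's growth diagrams --- does not work as stated. The descent condition in the growth-diagram picture is not a single-cell condition: it compares the heights of the crosses in columns $k$ and $k+1$ of the whole diagram, and even formulating it requires first stacking an auxiliary growth diagram for the partial tableau $T$ underneath (the paper only remarks that a proof along these lines is possible; it does not give one). More importantly, $Q$ is not produced directly from the letters $w_k$: it is the end of the three-stage composite $\cT\mapsto(\iota,T)\mapsto(I,T)\mapsto(Q,S)$, so the relative rows of $k$ and $k+1$ in $Q$ depend on where these entries land after Robinson--Schensted applied to the involution $\iota$ followed by column insertion of $w^{\rev}(I)$ into $T$. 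The paper bridges this by introducing the intermediate statistic $\Des(\iota,T)=\Des(\iota)\dotcup\Des(T)\dotcup\Des(T/\iota)$ --- note the cross term recording when $k$ lands in $T$ and $k+1$ in $\iota$ --- proving $\Des(\cT)=\Des(\iota,T)$ by your four-case analysis (Proposition~\ref{prop:Descents-Sun1}, which needs facts about how column deletion preserves relative row order), and then showing this set survives the remaining two stages via Sch\"utzenberger's theorem $\Des(\iota)=\Des(I)$ and the descent behaviour of column insertion (Lemmas~\ref{lem:RSK-properties} and~\ref{lem:RS-col-properties}). Without this intermediate bookkeeping your local analysis has nothing to compare against, so the heart of the proof is missing.
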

To prove this theorem we will first rephrase it in terms of
$n$-symplectic oscillating tableaux, also known as up-down-tableaux,
which are in bijection with the highest weight vertices in the
crystal graph corresponding to $\otimes^r V$.
\begin{definition}\label{def:oscillating}
  An \Dfn{oscillating tableau} of length $r$ and (final) \Dfn{shape}
  $\wgtCn$ is a sequence of partitions
  \[
  (\emptyset\!=\!\wgtCn^0,\wgtCn^1,\ldots,\wgtCn^r\!=\!\wgtCn)
  \] 
  such that the Ferrers diagrams of two consecutive partitions differ
  by exactly one box.   We view all Ferrers diagrams in English notation.

  The $k$-th step, going from $\wgtCn^{k-1}$ to $\wgtCn^k$, is an
  \Dfn{expansion} if a box is added and a \Dfn{contraction} if a
  box is deleted.  We will refer to the box that is added or
  deleted in the $k$-th step as $b_k$.

  The oscillating tableau $\cT=(\wgtCn^0,\wgtCn^1,\ldots ,\wgtCn^r)$
  is \emph{$n$-symplectic} if every partition $\wgtCn^i$ has at most
  $n$ non-zero parts.
\end{definition}

The next result follows immediately from the definitions above.

\begin{proposition}\label{prop:bij-oscillating-tableau-highest-weight-word}
  The following is a bijection between $n$-symplectic oscillating
  tableaux of length $r$ and highest weight vertices $w$ in the
  crystal graph corresponding to $\otimes^r V$: 
  \begin{itemize}
  \item given a highest weight vertex $w_1 w_2\dots w_r$, the
    oscillating tableau is the sequence of weights of its initial
    factors 
    \[
    (\wt(w_1),\, \wt(w_1w_2),\, \wt(w_1w_2w_3),\dots, \wt(w_1 w_2\dots w_r));
    \]
  \item for an oscillating tableau $\cT$ the corresponding word
    $w_1w_2\dots w_r$ is given by $w_k=\pm i$, where $i$ is the row
    of $b_k$ and one uses plus or minus if $b_k$ is added or deleted,
    respectively.
  \end{itemize}
\end{proposition}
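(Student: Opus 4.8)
The plan is to verify that the two assignments described are well-defined maps in opposite directions and that they are mutually inverse; since the statement asserts a bijection, establishing these three facts suffices. The central observation, from which everything else follows, is the identity $\wt(w_1 w_2\dots w_k)=\wgtCn^k$ relating the $k$-th initial weight of a word to the $k$-th partition of the associated oscillating tableau, so I would isolate this identity first and reuse it throughout.

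First I would check that the forward map is well-defined, i.e.\ that the sequence of initial weights of a highest weight vertex $w_1\dots w_r$ is an $n$-symplectic oscillating tableau. By the highest weight condition, each $\wt(w_1\dots w_k)$ is a partition, and it has at most $n$ non-zero parts since it is an $n$-tuple of non-negative integers. The empty initial factor has weight $\emptyset$, giving $\wgtCn^0=\emptyset$. Passing from $\wt(w_1\dots w_{k-1})$ to $\wt(w_1\dots w_k)$ appends the single letter $w_k$, which alters exactly one coordinate: the $i$-th coordinate increases by one if $w_k=i>0$ and decreases by one if $w_k=-i<0$. As both tuples are partitions, their Ferrers diagrams differ by exactly one box, located in row $i$. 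Hence the sequence is an oscillating tableau, and it is $n$-symplectic because each partition has at most $n$ parts.

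Next I would check the backward map. Given an $n$-symplectic oscillating tableau $(\emptyset=\wgtCn^0,\dots,\wgtCn^r=\wgtCn)$, the box $b_k$ lies in some row $i$ with $1\le i\le n$ by the $n$-symplectic hypothesis, so $w_k=\pm i\in\{\pm 1,\dots,\pm n\}$ and the word is legitimate. A straightforward induction on $k$ then establishes $\wt(w_1\dots w_k)=\wgtCn^k$: the base case $k=0$ is immediate, and the inductive step is exactly the coordinate change analysed above, with the sign of $w_k$ recording whether step $k$ is an expansion or a contraction. Since every $\wgtCn^k$ is a partition, this shows that $w$ is a highest weight vertex.

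Finally, the two maps are mutually inverse. The identity $\wt(w_1\dots w_k)=\wgtCn^k$ shows that applying the backward map followed by the forward map returns the original partition sequence. For the other composite, one recovers $w_k$ from the step $\wgtCn^{k-1}\to\wgtCn^k$ by reading off the row and the type (expansion or contraction) of the unique differing box, which is precisely how $w_k$ was defined. I do not anticipate a genuine obstacle here: the argument is pure bookkeeping, and the only point needing any care is confirming that both adjacent initial weights are partitions, so that a coordinate change by $\pm 1$ genuinely corresponds to a single-box modification of a Ferrers diagram. This is guaranteed on the two sides by the highest weight condition and the oscillating tableau condition, respectively, which is exactly why the result follows immediately from the definitions.
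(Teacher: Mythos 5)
Your argument is correct and is precisely the routine verification that the paper itself elides by saying the result ``follows immediately from the definitions'': the key identity $\wt(w_1\dots w_k)=\wgtCn^k$ together with the observation that appending a letter $\pm i$ changes exactly the $i$-th coordinate by one box. There is no divergence in approach to report.
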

Note that in general, the oscillating tableau obtained via Berele's
correspondence  (see Section~\ref{sec:correspondences}) from a word $w_1w_2\dots w_r$ is different from the
oscillating tableau given by the bijection above.

\begin{example}\label{ex:oscillating-tableaux}
  The $1$-symplectic oscillating tableaux of length three are
  \[
  (\emptyset,1,2,3),\quad (\emptyset,1,2,1),\quad\text{and}\quad
  (\emptyset,1,\emptyset,1).
  \]
In terms of Ferrers diagrams, these are
\[
\left(\emptyset\ ,\ \yng(1)\ ,\ \yng(2)\ ,\  \yng(3)\ \right),
\quad
\left(\emptyset\ ,\ \yng(1)\ ,\ \yng(2)\ ,\  \yng(1)\ \right),
\]
and
\[
\left(\emptyset\ ,\ \yng(1)\ ,\ \emptyset,\  \yng(1)\ \right),
\]
respectively.
  The corresponding words are
  \[
  1\,1\,1,\quad 1\,1\,\text{-}{1},\quad\text{and}\quad 1\,\text{-}{1}\,1.
  \]
 
 As a running example, we will use the oscillating tableau
  \[
 \cT= (\emptyset,1,11,21,2,1,2,21,211,21)
  \]
  which has length $9$ and shape $21$.  It is $3$-symplectic (since
  no partition has four parts) but it is not $2$-symplectic (since
  there is a partition with three parts).  The corresponding word is
  \[
  w=1\,2\,1\,\text{-}{2}\,\text{-}{1}\,1\,2\,3\,\text{-}{3}
  \]
  with descent set
  \begin{equation}
  \label{eq:Desw}
  \Des(w)=\{1,3,4,6,7,8\}.
  \end{equation}
\end{example}

We will now define the descent set of an oscillating tableau in such a way that a tableau $\cT$ and its word $w$ will always have the same descent set.

\begin{definition}\label{def:desosctab}
  An oscillating tableau $\cT$ has a \emph{descent} at position $k$ if
  \begin{itemize}
  \item step $k$ is an expansion and step $k+1$ is a contraction, or
  \item steps $k$ and $k+1$ are both expansions and $b_k$ is in a row
    strictly above $b_{k+1}$, or
  \item steps $k$ and $k+1$ are both contractions and $b_k$ is in a
    row strictly below $b_{k+1}$.
  \end{itemize}
   The
  \Dfn{descent set} of $\cT$ is
  \[
  \Des(\cT)=\{k\ | \text{ $k$ is a descent of $\cT$}\}.
  \]
\end{definition}
\begin{example}
  The descent set of the oscillating tableau $\cT$ from
  Example~\ref{ex:oscillating-tableaux} is 
  \[
  \Des(\cT)= \{1,3,4,6,7,8\}.
  \]
which is the same as the descent set in~\eqref{eq:Desw} as predicted.
\end{example}

The fact that these two descent sets always coincide is easy to prove directly from the definitions, so we omit the proof and just formally state the result.
\begin{proposition}\label{prop:des-w-des-t}
The descent set of an $n$-symplectic oscillating tableau of length $r$ coincides
  with the descent set of the corresponding highest weight vertex of
  the crystal graph of $\otimes^rV$.
\end{proposition}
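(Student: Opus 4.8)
The plan is to verify the claim by unwinding both definitions and checking that they agree position-by-position. Fix an $n$-symplectic oscillating tableau $\cT=(\wgtCn^0,\dots,\wgtCn^r)$ and let $w=w_1\dots w_r$ be the corresponding highest weight word under Proposition~\ref{prop:bij-oscillating-tableau-highest-weight-word}. By that bijection, $w_k=+i$ when step $k$ is an expansion adding a box in row $i$, and $w_k=-i$ when step $k$ is a contraction deleting a box from row $i$. So the entire task reduces to comparing, for each fixed $k\in\{1,\dots,r-1\}$, the two conditions: (i) that $k$ is a descent of $\cT$ in the sense of Definition~\ref{def:desosctab}, and (ii) that there is a nontrivial directed path from $w_k$ to $w_{k+1}$ in the crystal graph~\eqref{eq:crystal-Sp}, which is the definition of a descent of $w$ from Definition~\ref{def:desoscword}.

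First I would record exactly when a nontrivial directed path from one vertex to another exists in~\eqref{eq:crystal-Sp}. Since the graph is the single directed path $1\to2\to\dots\to n\to -n\to\dots\to -1$, a nontrivial path from $a$ to $b$ exists precisely when $a$ strictly precedes $b$ in this linear order. Writing this order out, the positive letters are ordered by $1<2<\dots<n$, every positive letter precedes every negative letter, and the negative letters are ordered by $-n<-(n-1)<\dots<-1$, i.e.\ among negatives the one with larger row index comes later. Thus $w_k\prec w_{k+1}$ in exactly three mutually exclusive cases: both positive with the row of $w_k$ strictly less than the row of $w_{k+1}$; $w_k$ positive and $w_{k+1}$ negative; or both negative with the row index of $w_k$ strictly larger than that of $w_{k+1}$.

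The remaining step is to match these three cases against the three bullet points of Definition~\ref{def:desosctab}. Translating through the signs: $w_k$ positive means step $k$ is an expansion and $w_{k+1}$ negative means step $k+1$ is a contraction, so the middle crystal case matches the first bullet. For two expansions, $w_k=+i$ and $w_{k+1}=+j$ with $i<j$ says $b_k$ lies in a row strictly above $b_{k+1}$ (recall diagrams are in English notation, so smaller row index is higher), matching the second bullet. For two contractions, $w_k=-i$ and $w_{k+1}=-j$ with the crystal condition $i>j$ says $b_k$ lies in a row strictly below $b_{k+1}$, matching the third bullet. One should also note the remaining combination---step $k$ a contraction and step $k+1$ an expansion, i.e.\ $w_k$ negative and $w_{k+1}$ positive---gives $w_{k+1}\prec w_k$, hence no descent in either sense, so the correspondence is complete. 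Since these are the only possibilities, $\Des(\cT)=\Des(w)$.

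There is no serious obstacle here; the only point requiring care is bookkeeping the orientation conventions, namely that the English notation makes ``higher'' mean smaller row index, and that in the negative tail of the crystal graph~\eqref{eq:crystal-Sp} the row indices decrease along the arrows, so ``$w_k$ strictly below $w_{k+1}$'' corresponds to $w_k$ preceding $w_{k+1}$. Once these conventions are pinned down, the three cases line up one-to-one and the proof is a direct case check, which is why it is reasonable to omit it.
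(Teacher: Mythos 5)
Your proof is correct, and it is exactly the direct definitional case check that the paper has in mind when it says the result ``is easy to prove directly from the definitions'' and omits the argument. The three cases of Definition~\ref{def:desosctab} are matched correctly against the linear order on the crystal~\eqref{eq:crystal-Sp}, and the fourth (contraction then expansion) case is properly dispatched, so nothing is missing.
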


Thus it suffices to prove the following variant of
Theorem~\ref{thm:main-crystal}.
\begin{theorem}\label{thm:main}
  Let $V$ be the defining representation of the symplectic group
  $\Sp(2n)$.  Then the Frobenius character of the isotypic component, $U(r,\wgtCn)$,
 of $\otimes^r V$ is
  \[
  \ch U(r,\wgtCn) = \sum_\cT \fQ_{\Des(\cT)},
  \]
  where the sum is over all $n$-symplectic oscillating tableaux of
  length $r$ and shape $\wgtCn$.
\end{theorem}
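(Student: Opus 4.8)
The plan is to deduce Theorem~\ref{thm:main} from the character formula~\eqref{eq:Frobenius} by expanding each Schur function into fundamental quasi-symmetric functions and then reindexing the resulting sum by means of the Sundaram correspondence, which we set up in Section~\ref{sec:correspondences}. Starting from
\[
\ch U(r,\wgtCn) = \sum_{\wgtSr\in P(r)} a(\wgtSr,\wgtCn)\, s_{\wgtSr},
\]
I would substitute the expansion~\eqref{eq:quasisymmetric}, namely $s_{\wgtSr} = \sum_{Q\in\SYT(\wgtSr)} \fQ_{\Des(Q)}$, to obtain
\[
\ch U(r,\wgtCn) = \sum_{\wgtSr\in P(r)}\ \sum_{Q\in\SYT(\wgtSr)} a(\wgtSr,\wgtCn)\, \fQ_{\Des(Q)}.
\]
Reading the multiplicity $a(\wgtSr,\wgtCn)$ as the cardinality of an indexing set $\mathcal{B}(\wgtSr,\wgtCn)$ for the space $A(\wgtSr,\wgtCn)$ of~\eqref{eq:Sundaram}, the right-hand side becomes a sum of $\fQ_{\Des(Q)}$ over all triples $(\wgtSr, Q, b)$ with $Q\in\SYT(\wgtSr)$ and $b\in\mathcal{B}(\wgtSr,\wgtCn)$.

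The next step is to invoke Sundaram's correspondence as a bijection $\Sun$ from the set of $n$-symplectic oscillating tableaux of length $r$ and shape $\wgtCn$ onto precisely these triples, sending each $\cT$ to a pair whose recording component $Q$ is a standard Young tableau of some shape $\wgtSr\in P(r)$. That $\Sun$ has this domain and codomain is the combinatorial shadow of the decomposition~\eqref{eq:Sundaram}: by Proposition~\ref{prop:bij-oscillating-tableau-highest-weight-word} the oscillating tableaux of shape $\wgtCn$ index a basis of $U(r,\wgtCn)$, so that on cardinalities one has $\dim U(r,\wgtCn) = \sum_{\wgtSr} a(\wgtSr,\wgtCn)\, |\SYT(\wgtSr)|$, matching the count of triples above.

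The crux, and the step I expect to be the main obstacle, is to show that $\Sun$ preserves descent sets: if $\Sun(\cT) = (Q,b)$, then $\Des(\cT) = \Des(Q)$, where $\Des(\cT)$ is the oscillating-tableau descent set of Definition~\ref{def:desosctab} and $\Des(Q)$ is the usual descent set of $Q$ (so $k\in\Des(Q)$ exactly when $k+1$ lies in a row strictly below $k$). Granting this, each $\fQ_{\Des(Q)}$ in the double sum equals $\fQ_{\Des(\cT)}$ for the unique $\cT$ with $\Sun(\cT)=(Q,b)$, and reindexing the sum over triples by $\cT$ yields $\ch U(r,\wgtCn) = \sum_{\cT} \fQ_{\Des(\cT)}$, which is the assertion. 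The descent-preservation statement is local on both sides: whether $\cT$ has a descent at $k$ depends only on steps $k$ and $k+1$ (the four cases expansion/contraction against expansion/contraction in Definition~\ref{def:desosctab}), while whether $Q$ has a descent at $k$ depends only on the relative rows of the entries $k$ and $k+1$. I therefore plan to prove it by a step-by-step analysis of Sundaram's insertion-and-deletion procedure, tracking how each pair of consecutive steps forces the recording entries $k$ and $k+1$ to occupy the same row, or $k+1$ to sit strictly below $k$, in exact parallel with Sch\"utzenberger's classical argument that Robinson-Schensted preserves descents, but now also handling the contraction steps. Checking that the three descent conditions of Definition~\ref{def:desosctab} translate correctly into these row comparisons across all four cases is where the real work lies, and the running example of Example~\ref{ex:oscillating-tableaux} will serve as a useful consistency check along the way.
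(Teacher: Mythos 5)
Your proposal is correct and follows essentially the same route as the paper: the paper likewise reduces Theorem~\ref{thm:main} to the statement that Sundaram's bijection preserves descent sets ($\Des(\cT)=\Des(Q)$, its Theorem~\ref{thm:descents}) and then chains together Theorem~\ref{thm:Sundaram-bijection}, the identity $a(\wgtSr,\wgtCn)=\sum_\beta c_{\wgtCn,\beta}^{\wgtSr}(n)$ from Theorem~\ref{thm:Sundaram-coefficients}, and the quasi-symmetric expansion~\eqref{eq:quasisymmetric}, exactly as you do (your indexing set $\mathcal{B}(\wgtSr,\wgtCn)$ is realized concretely as the set of $n$-symplectic Littlewood--Richardson tableaux of shape $\wgtSr/\wgtCn$). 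The only caveat is that the descent-preservation step is less ``local'' than your sketch suggests, since $Q$ is not a step-by-step recording tableau of $\cT$; the paper proves it by factoring $\Sun$ as $\Sun_2\circ\RS\circ\Sun_1$, introducing the intermediate descent set $\Des(\iota)\dotcup\Des(T)\dotcup\Des(T/\iota)$ for the output $(\iota,T)$ of $\Sun_1$, and then appealing to descent properties of Robinson--Schensted and of column insertion.
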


Let us motivate Definition~\ref{def:desosctab} in a second way.  Note
that a standard Young tableau $T$ can be regarded as an oscillating
tableau $\cT$ where every step is an expansion and the box containing
$k$ in $T$ is added during the $k$-th step in $\cT$.  In this case
\[
\Des(\cT)=\Des(T),
\]
where a descent of a standard Young tableau $T$ is an integer $k$
such that $k+1$ appears in a lower row than $k$ of $T$.  So the
definition of descents for oscillating tableaux is an extension of
the usual one for standard tableaux.

In Sundaram's correspondence, an arbitrary oscillating tableau $\cT$
is first transformed into a fixed-point-free involution $\iota$ on a
subset $A$ of the positive integers and a partial Young tableau $T$,
that is, a filling of a Ferrers shape with all entries distinct and
increasing in rows and columns.  There are natural notions of
descents for these objects which extend those for permutations in
$\fS_r$ and standard Young tableaux.
\begin{definition}\label{sec:des-partial}
 Let $A$ be a set of positive integers.  The \Dfn{descent set} of a bijection $\iota:A\rightarrow A$ is
  \[
  \Des(\iota)=\{k: k,k+1\in A,\; \iota(k)>\iota(k+1)\}.
  \]
  For a partial Young tableau $T$ whose set of entries is $A$, the
  \Dfn{descent set} is
  \[
  \Des(T)=\{k: k,k+1\in A,\; \text{$k+1$ is in a row below $k$}\}.
  \]
\end{definition}

The definition of the descent set for an oscillating tableau is constructed so
that it contains the union of
the descent sets of the associated partial tableau and involution under Sundaram's bijection.


\section{The correspondences of Berele and Sundaram}
\label{sec:correspondences}


One of our main tools for proving Theorem~\ref{thm:main} will be a
bijection, $\Sun$, due to Sundaram~\cite{MR2941115,MR1035496}.  
 Berele~\cite{MR867655} constructed a bijection which is a combinatorial counterpart of the isomrphism in equation~\eqref{eq:Berele} when $V$ is the defining
representation of $\Sp(2n)$
In combination with Berele's  correspondence, Sundaram's bijection can be regarded  in its turn as
a combinatorial counterpart of the isomorphism in
equation~\eqref{eq:Sundaram}.  In this section we define the objects
involved; the bijection $\Sun$  itself will be described in detail in the
next section.

\begin{definition}\label{def:symplectic-LR}
  Let $u$ be a word with letters in the positive integers.  Then $u$ is a
  \Dfn{Yamanouchi word} (or \Dfn{lattice permutation}) if, in each
  initial factor $u_1u_2\dots u_k$, there are at
  least as many occurrences of $i$  as there are  of
  $i+1$ for all $i\ge1$.
  The \Dfn{weight} $\beta$ of a lattice permutation $u$ is the
  partition $\beta=(\beta_1\ge\beta_2\ge\cdots)$, where $\beta_i$ is
  the number of occurrences of the letter $i$ in $u$.

 For a skew semistandard Young tableau $S$ the \Dfn{reading word}, $w(S)$, is  the word
obtained by concatenating the rows from bottom to top.  It has the nice property that applying the
Robinson-Schensted map to $w(S)$ one recovers $S$ as the insertion tableau.  We will need the
\Dfn{reverse reading  word}, $w^{rev}(S)$, which is obtained by reading $w(S)$ backwards.

A skew semistandard Young tableau $S$ of shape $\wgtSr/\wgtCn$ is
called  an \Dfn{$n$-symplectic Littlewood-Richardson tableau of weight
    $\beta$} if
  \begin{itemize}
  \item its reverse reading word is a lattice permutation of weight
    $\beta$, where $\beta$ is a partition with all columns having
    even length, and
  \item entries in row $n+i+1$ of $S$ are greater than or equal
    to $2i+2$ for $i\geq 0$. 
  \end{itemize}

 We will denote  the number of $n$-symplectic Littlewood-Richardson tableaux of
  shape $\wgtSr/\wgtCn$ and weight~$\beta$ by
  $c_{\wgtCn,\beta}^\wgtSr(n)$.
\end{definition}

For $\ell(\wgtSr)\leq n+1$ we have that the number $c_{\wgtCn,\beta}^\wgtSr(n)$ is
the usual Littlewood-Richardson coefficient whenever $\beta$ is a partition
with all columns having even length.  This is trivial for
$\ell(\wgtSr)\leq n$, and follows from the correspondence $\Sun$
described below for $\ell(\wgtSr)=n+1$.

Note that we are only interested in the case when the length of $\mu$
is at most $n$.  In this case the restriction on the size of the
entries is equivalent to the condition given by Sundaram that $2i+1$
appears no lower than row $n+i$ for $i\geq 0$.
As an example, when $\mu=(1)$ there is a single $1$-symplectic
Littlewood-Richardson tableau of weight $\beta=(1,1)$, namely
\[\young(\hfil 1,2)\]

We alert the reader that there is a typographical error  
in both~\cite[Definition~9.5]{MR2941115}
and~\cite[Definition~3.9]{MR1035496}, where the range of indices is
stated as $1\leq i\leq\frac{1}{2}\ell(\beta)$.  With this definition,
\[
  \young(\hfil,1,2)
\]
would also be $1$-symplectic, since $\beta=(1,1)$ and the only
relevant value for $i$ would be $1$, giving $2i+1=3$, which does not
appear at all in the tableau.  However, there are two $1$-symplectic
oscillating tableaux of length $3$ and shape $(1)$, and there are two
standard Young tableaux of shape $(2,1)$.  Thus, if the tableau above
were $1$-symplectic, Theorem~\ref{thm:Sundaram-bijection} below would
fail.

We now have all the definitions in place to explain the domain and
range of the correspondence $\Sun$.
\begin{theorem}[\protect{\cite[Theorem~9.4]{MR2941115}}]
  \label{thm:Sundaram-bijection}
  The map $\Sun$  described below is a bijection between $n$-symplectic
  oscillating tableaux of length $r$ and shape $\wgtCn$ and pairs
  $(Q, S)$, where
  \begin{itemize}
  \item $Q$ is a standard tableau of shape $\wgtSr$, with
    $|\wgtSr|=r$, and
  \item $S$ is an $n$-symplectic Littlewood-Richardson tableau of
    shape $\wgtSr/\wgtCn$ and weight $\beta$, where $\beta$ has even columns and
    $|\beta|=r-|\wgtCn|$.
  \end{itemize}
\end{theorem}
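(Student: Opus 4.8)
The plan is to construct $\Sun$ as a composite of three bijections, each of a classical type, and then to verify \emph{separately} that the $n$-symplectic restriction on the domain matches the entry condition imposed on $S$ in the codomain. The first and most substantial stage processes the oscillating tableau $\cT=(\emptyset=\mu^0,\dots,\mu^r=\mu)$ from left to right while maintaining a partial standard tableau $T_k$ of shape $\mu^k$. Reading step $k$: if step $k$ is an expansion I place the label $k$ in the new cell $b_k$; if step $k$ is a contraction I delete the corner $b_k$ by a reverse Schensted insertion (the inverse of row-bumping applied to that corner), which ejects a single earlier label $j<k$ from the top row, and I record the arc $\{j,k\}$. After all $r$ steps the surviving labels form a partial standard tableau $T$ of shape $\mu$ on a set of $|\mu|$ indices, and the recorded arcs form a fixed-point-free involution $\iota$ on the complementary set $A$ of size $r-|\mu|$; the output is the pair $(T,\iota)$ of Definition~\ref{sec:des-partial}. (The counts match: with $e$ expansions and $c$ contractions one has $e+c=r$, $e-c=|\mu|$, so $|A|=2c=r-|\mu|$ is even.)

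Bijectivity of this stage follows from reversibility of each local move: reverse bumping undoes forward bumping, and a specified ejected value together with its step index determines a unique reinsertion, so running the procedure backwards from $(T,\iota)$ reconstructs $\cT$ uniquely. I would in fact prefer to phrase this entire stage through Roby's growth-diagram local rules, whose built-in symmetry makes invertibility transparent and removes most of the bookkeeping.

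For the remaining two stages I would use standard RSK technology. A fixed-point-free involution $\iota$ on $A$ maps under the Robinson--Schensted correspondence for involutions to a single standard tableau $Q_\iota$ whose shape $\beta$ has all columns of even length (the number of odd columns equals the number of fixed points, here zero), with $|\beta|=|A|=r-|\mu|$. It then remains to merge the standard tableau $T$ of shape $\mu$ with $Q_\iota$ of shape $\beta$ into a pair $(Q,S)$ in which $Q$ is standard of shape $\lambda\supseteq\mu$ with $|\lambda|=r$ and $S$ is a filling of $\lambda/\mu$ of weight $\beta$. For this I would rectify: place $Q_\iota$ in skew position beyond $\mu$ and slide it inward via jeu de taquin, letting $Q$ be the resulting straight-shape standard tableau and $S$ the skew tableau whose reverse reading word is the lattice permutation tracking the rectification. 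That $S$ is genuinely an $n$-symplectic Littlewood--Richardson tableau of weight $\beta$ with even columns is the assertion that $c^\lambda_{\mu,\beta}$ counts such fillings; since all three stages are individually invertible, their composite is a bijection between oscillating tableaux and pairs $(Q,S)$ of the stated shapes and weights, \emph{ignoring} the $n$-symplectic conditions.

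The main obstacle is to prove that the $n$-symplectic hypothesis on $\cT$ — every intermediate $\mu^i$ has at most $n$ rows — corresponds exactly to the entry condition on $S$, namely that entries in row $n+i+1$ are at least $2i+2$. The difficulty is that the domain condition is \emph{global}, a constraint on all $r$ intermediate shapes at once, whereas the codomain condition is a \emph{static} inequality read off the single tableau $S$. I would control this by tracking, through the first stage, how deep a row can be occupied: a cell reaching row $n+i+1$ carrying a small label is forced by the reverse slides that produced it to have passed through intermediate shapes with more than $n$ rows, and conversely. Concretely I expect to prove the equivalence by induction on $r$, showing that the maximal number of rows among $\mu^0,\dots,\mu^r$ is a monotone function of the positions of the even-column pairs of $\iota$ relative to $\mu$, and then transporting that statistic across the rectification step to the row/entry bound on $S$. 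I anticipate that essentially all of the genuine work lives here — the underlying up-down correspondence is classical, but reconciling the recursive row-depth statistic of the oscillating tableau with the tableau $S$ is the delicate symplectic refinement. This is also precisely where I would sanity-check against the running example of Example~\ref{ex:oscillating-tableaux} and against the index-range caveat the authors flag, since an off-by-one in the bound $2i+2$ is exactly the kind of error that breaks the count $f^{(2,1)}=2$ versus the two oscillating tableaux of length $3$ and shape $(1)$.
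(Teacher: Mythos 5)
Your three-stage decomposition is the same one the paper uses (and attributes to Sundaram): $\cT\mapsto(\iota,T)$ by an insertion/deletion scan, then $\RS$ applied to the fixed-point-free involution to get a tableau $I$ with even columns, then a merge of $(I,T)$ into $(Q,S)$. Your counting of $|A|=r-|\mu|$ and the even-column fact for involutions are correct. But there are two concrete problems. First, your third stage is garbled as stated: placing $Q_\iota$ in skew position and sliding it inward via jeu de taquin produces the \emph{rectification} of $Q_\iota$, a straight-shape tableau of shape $\beta$ with $|\beta|=r-|\mu|$; it does not produce a standard tableau $Q$ of shape $\lambda$ with $|\lambda|=r$ containing the entries of $T$ as well. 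What is actually needed (this is $\Sun_2$) is to column-insert the reverse reading word of $I$ \emph{into} $T$, recording the row index in $I$ of each inserted letter in the box it creates; that recording tableau is $S$. A jeu-de-taquin reformulation exists (tableau switching), but not the one you wrote.

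Second, and more seriously, the entire content of the theorem beyond classical RSK bookkeeping is the claim that the $n$-symplectic condition on $\cT$ (all intermediate shapes have at most $n$ rows) corresponds exactly to the entry bound on $S$ (entries in row $n+i+1$ are at least $2i+2$), and you explicitly defer this, offering only a plan. The plan is moreover doubtful: you propose to characterize $n$-symplecticity by a statistic of $(\iota,T)$ and then transport it, but the paper points out (citing Sundaram's Lemma~9.3) that there is no straightforward way to read off from the pair $(\iota,T)$ whether the corresponding oscillating tableau is $n$-symplectic; the condition only becomes legible after the second and third stages. The paper itself does not reprove this either --- it cites Sundaram's Theorem~9.4 --- but a proof attempt that stops where yours does establishes only a bijection between \emph{all} oscillating tableaux of length $r$ and shape $\mu$ and \emph{all} pairs $(Q,S)$ with $S$ an ordinary Littlewood--Richardson filling of even-column weight, not the restricted correspondence in the statement. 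A minor further point: in your first stage you eject via reverse \emph{row} bumping, whereas $\Sun_1$ uses column deletion; either convention gives a bijection, but they give different maps, and the column convention is the one needed for the descent-preservation argument later in the paper.
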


For completeness, let us point out the relation between Sundaram's
bijection and Berele's correspondence.  This correspondence involves
the following objects due to King~\cite{MR0480895}, indexing the
irreducible representations of the symplectic group $\Sp(2n)$.
\begin{definition}
  An \Dfn{$n$-symplectic semistandard tableau of shape $\wgtCn$} is a
  filling of $\wgtCn$ with letters from $1<-1<2<-2<\dots <n<-n$ such
  that
  \begin{itemize}
  \item entries in rows are weakly increasing,
  \item entries in columns are strictly increasing, and
  \item the entries in row $i$ are greater than or equal to $i$, in
    the above ordering.
  \end{itemize}
\end{definition}
Denoting the set of $n$-symplectic oscillating tableaux of length $r$
and final shape $\wgtCn$ by $\Osc(r, n, \wgtCn)$ and the set of
$n$-symplectic semistandard tableaux of shape $\wgtCn$ by
$\K(\wgtCn,n)$, Berele's correspondence is a bijection
\begin{equation*}
  \label{eq:Berele-explicit}
  \Ber: \{\pm 1,\dots,\pm n\}^r \to \bigcup_{\ell(\wgtCn)\leq n} \K(\wgtCn,n)\times\Osc(r, n, \wgtCn).
\end{equation*}
Analogous to  equation~\eqref{eq:Schur-character}, the character of
the representation $V(\wgtCn)$ of the symplectic group $\Sp(2n)$ is
\begin{equation}\label{eq:symplectic-character}
  sp_\wgtCn(x_1^{\pm1},\dots,x_n^{\pm1}) = \sum_{T\in\K(\wgtCn,n)} {\mathbf x}^T.
\end{equation}

Now consider an $n$-symplectic oscillating tableau as a word in the
ordered alphabet $1<-1<2<-2<\dots <n<-n$, as described just after
Definition~\ref{def:oscillating}.  We can then apply the
Robinson-Schensted correspondence to obtain a semistandard Young
tableau $P_{\RS}$ in this alphabet and an (ordinary) standard Young
tableau $Q_{\RS}$.  Alternatively, we can compose Berele's
correspondence with Sundaram's bijection to obtain a triple
$(P_{\Ber},Q_{\Sun},S_{\Sun})$.  It then turns out that
$Q_{\RS}=Q_{\Sun}$.  This implies that for each standard Young
tableau $Q_{\Sun}$ we have a correspondence $P_{\RS}\mapsto
(P_{\Ber}, S_{\Sun})$.  Moreover, this correspondence is independent
of the choice of $Q_{\Sun}$.  One can then prove the following
theorem.
\begin{theorem}[\protect{\cite[Theorem~12.1]{MR2941115}}]
  \label{thm:Sundaram-coefficients}
 If $V$ is the defining representation of $\Sp(2n)$ then the
  coefficient $a(\wgtSr,\wgtCn)$ is  given by
  \begin{equation*}
    a(\wgtSr,\wgtCn) = \sum_\beta c_{\wgtCn,\beta}^\wgtSr(n)
  \end{equation*} 
  where the sum is over the partitions $\beta$ of $|\wgtSr|-|\wgtCn|$
  having only columns of even length.
\end{theorem}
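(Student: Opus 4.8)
The plan is to identify the multiplicity $a(\wgtSr,\wgtCn)$ representation-theoretically as a branching coefficient, and then to compute that coefficient combinatorially by pushing generating functions through the correspondence $P_{\RS}\mapsto(P_{\Ber},S_{\Sun})$ described above. First I would recall that the defining representation $V$ of $\Sp(2n)$ is the restriction of the defining representation of $\GL(2n)$, so Schur--Weyl duality for $\GL(2n)$ gives $\otimes^r V\cong\bigoplus_{\ell(\wgtSr)\leq 2n}V_{\GL(2n)}(\wgtSr)\otimes S(\wgtSr)$ as a $\GL(2n)\times\fS_r$-module. Restricting the $\GL(2n)$-action to $\Sp(2n)$ and comparing with equations~\eqref{eq:Berele} and~\eqref{eq:Sundaram} identifies $a(\wgtSr,\wgtCn)$ with the branching multiplicity $[\,V_{\GL(2n)}(\wgtSr)\big|_{\Sp(2n)}:V(\wgtCn)\,]$; equivalently, at the level of characters restricted to the symplectic torus $\mathrm{diag}(x_1,\dots,x_n,x_1^{-1},\dots,x_n^{-1})$,
\[
s_\wgtSr(x_1,x_1^{-1},\dots,x_n,x_n^{-1})=\sum_\wgtCn a(\wgtSr,\wgtCn)\,sp_\wgtCn(x_1^{\pm1},\dots,x_n^{\pm1}).
\]

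Next I would exploit the correspondence $P_{\RS}\mapsto(P_{\Ber},S_{\Sun})$ obtained above from the identity $Q_{\RS}=Q_{\Sun}$ together with its independence from the chosen recording tableau. Fixing any standard tableau $Q$ of shape $\wgtSr$, this correspondence is a bijection between the set $\SSYT(\wgtSr,\{1,-1,\dots,n,-n\})$ of semistandard tableaux of shape $\wgtSr$ in the ordered symplectic alphabet and the pairs $(P_{\Ber},S_{\Sun})$ in which $P_{\Ber}\in\K(\wgtCn,n)$ and $S_{\Sun}$ is an $n$-symplectic Littlewood--Richardson tableau of shape $\wgtSr/\wgtCn$ and some even-column weight $\beta$. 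The essential point is that this bijection is weight preserving: the $\GL(2n)$-weight of $P_{\RS}$ equals the symplectic weight of $P_{\Ber}$, while $S_{\Sun}$ records only $\fS_r$-data and carries no torus weight. Summing ${\mathbf x}^{P_{\RS}}$ over the two sides, using equation~\eqref{eq:Schur-character} in the $2n$ variables on the left and equation~\eqref{eq:symplectic-character} on the right, would then yield
\[
s_\wgtSr(x_1,x_1^{-1},\dots,x_n,x_n^{-1})=\sum_\wgtCn\Bigl(\sum_\beta c_{\wgtCn,\beta}^\wgtSr(n)\Bigr)sp_\wgtCn(x_1^{\pm1},\dots,x_n^{\pm1}),
\]
since for each $\wgtCn$ the number of admissible $S_{\Sun}$ is exactly $\sum_\beta c_{\wgtCn,\beta}^\wgtSr(n)$ by Definition~\ref{def:symplectic-LR}.

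Finally, the irreducible symplectic characters $sp_\wgtCn$ with $\ell(\wgtCn)\leq n$ are linearly independent, so comparing the two displayed expansions coefficient by coefficient gives $a(\wgtSr,\wgtCn)=\sum_\beta c_{\wgtCn,\beta}^\wgtSr(n)$, as claimed. I expect the main obstacle to lie in the second paragraph, specifically in justifying that the map $P_{\RS}\mapsto(P_{\Ber},S_{\Sun})$ is genuinely well defined and independent of $Q$: this rests on the compatibility $Q_{\RS}=Q_{\Sun}$, whose verification requires tracking how ordinary Robinson--Schensted insertion in the symplectic alphabet interacts with the jeu-de-taquin and cancellation steps underlying $\Sun$. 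It is precisely this compatibility that forces the combinatorial side to produce $n$-\emph{symplectic} Littlewood--Richardson tableaux, with the row restriction of Definition~\ref{def:symplectic-LR}, rather than ordinary Littlewood--Richardson fillings; this is what encodes the non-stable corrections to Littlewood's restriction rule once $\ell(\wgtSr)$ exceeds the stable range, so that a naive appeal to Littlewood's rule alone would produce the wrong, $n$-independent, coefficients.
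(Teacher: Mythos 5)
Your proposal is correct and takes essentially the same route as the paper, which cites Sundaram's Theorem~12.1 and sketches exactly this argument: the compatibility $Q_{\RS}=Q_{\Sun}$ yields the weight-preserving correspondence $P_{\RS}\mapsto(P_{\Ber},S_{\Sun})$, and after identifying $a(\wgtSr,\wgtCn)$ with the $\GL(2n)$-to-$\Sp(2n)$ branching multiplicity via Schur--Weyl duality, summing weights and invoking linear independence of the characters $sp_\wgtCn$ gives the coefficient formula. You also rightly flag that the one ingredient taken on citation --- the identity $Q_{\RS}=Q_{\Sun}$ and the well-definedness of the induced map --- is precisely what the paper, following Sundaram, asserts without proof.
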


Let us point out a corollary, which settles a conjecture
from~\cite{Westbury2009}.
\begin{corollary}\label{cor:Frobenius-invariant}
 If $V$ is the defining representation of $\Sp(2n)$ then the  Frobenius
  character of the isotypic component of
$\otimes^rV$ of weight $0$ is given by
  \[
  \ch U(r, \emptyset)=\sum_{\beta} s_\beta,
  \]
  where the sum is over the partitions $\beta$ of $r$ such that if $c$ is any column length of $\beta$ then $c$ is even and $c\le 2n$.
\end{corollary}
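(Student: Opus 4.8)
The plan is to derive Corollary~\ref{cor:Frobenius-invariant} as the special case $\wgtCn=\emptyset$ of Theorem~\ref{thm:Sundaram-coefficients}, combined with the general Frobenius character expansion in equation~\eqref{eq:Frobenius}. Setting $\wgtCn=\emptyset$ in equation~\eqref{eq:Frobenius} gives $\ch U(r,\emptyset)=\sum_{\wgtSr\in P(r)} a(\wgtSr,\emptyset)\,s_\wgtSr$, so everything reduces to identifying the coefficients $a(\wgtSr,\emptyset)$.

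First I would substitute $\wgtCn=\emptyset$ into Theorem~\ref{thm:Sundaram-coefficients}, obtaining
\[
a(\wgtSr,\emptyset)=\sum_\beta c_{\emptyset,\beta}^\wgtSr(n),
\]
where $\beta$ ranges over partitions of $|\wgtSr|=r$ with only even columns. The key observation is that an $n$-symplectic Littlewood-Richardson tableau of shape $\wgtSr/\emptyset=\wgtSr$ and weight $\beta$ must, in particular, be a filling of $\wgtSr$ whose reverse reading word is a lattice permutation of weight $\beta$; since the shape and weight then force $\wgtSr=\beta$, the only candidate is the tableau of shape $\beta$ in which row $i$ is filled entirely with the letter $i$. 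Thus $c_{\emptyset,\beta}^\wgtSr(n)$ is either $0$ or $1$, and equals $1$ exactly when $\wgtSr=\beta$ and this canonical filling satisfies the $n$-symplectic row-entry condition from Definition~\ref{def:symplectic-LR}.

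The main step is therefore to translate the $n$-symplectic condition on this canonical filling into a bound on column lengths. The condition ``entries in row $n+i+1$ are greater than or equal to $2i+2$ for $i\ge 0$'' must be checked against the filling where row $j$ contains only the letter $j$. For row $n+i+1$ the entry is $n+i+1$, and the requirement $n+i+1\ge 2i+2$ simplifies to $i\le n-1$, i.e.\ $n+i+1\le 2n$. Hence the canonical tableau is $n$-symplectic precisely when $\wgtSr=\beta$ has no row beyond row $2n$, equivalently no column of length exceeding $2n$. Combining this with the even-column requirement on $\beta$ yields $a(\wgtSr,\emptyset)=1$ when $\wgtSr$ has all columns of even length at most $2n$, and $a(\wgtSr,\emptyset)=0$ otherwise. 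Substituting back into the Frobenius expansion gives exactly the stated sum $\ch U(r,\emptyset)=\sum_\beta s_\beta$ over such partitions.

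I expect the only genuine obstacle to be the careful bookkeeping in the last step: verifying that the $n$-symplectic row-entry inequalities, when applied to the diagonal filling, are equivalent to the clean column-length bound $c\le 2n$, and confirming there are no off-by-one issues in the index shift (row $n+i+1$ versus the cutoff $2n$). I would also note the remark in the excerpt about the typographical error in the index range of the original definition, to ensure I use the corrected range $i\ge 0$ rather than the erroneous $1\le i\le \tfrac12\ell(\beta)$, since the boundary case is precisely where the corrected bound matters. Everything else is a direct specialization and requires no new machinery beyond the cited theorems.
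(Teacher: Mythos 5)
Your proposal is correct and follows essentially the same route as the paper: specialize equation~\eqref{eq:Frobenius} and Theorem~\ref{thm:Sundaram-coefficients} to $\wgtCn=\emptyset$, observe that the lattice-permutation condition on a straight-shape semistandard tableau forces the canonical filling with row $j$ constant equal to $j$ (hence $\wgtSr=\beta$ and $c_{\emptyset,\beta}^{\wgtSr}(n)\in\{0,1\}$), and then translate the row-entry inequality $n+i+1\ge 2i+2$ into the column-length bound $c\le 2n$. The index bookkeeping in your last step matches the paper's computation exactly.
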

\begin{proof}
Comparing equation~(\ref{eq:Frobenius}) with the previous Theorem,  it suffices to show that  $c_{\emptyset,\beta}^{\lambda}$ is nonzero only when $\lambda=\beta$ is a partition of the type described in the corollary, and in this case there is only one corresponding tableau.
  Suppose  $S$ is an $n$-symplectic Littlewood-Richardson tableau
  of shape $\wgtSr/\emptyset$ and weight $\beta$.  Then $S$ is a semistandard
  Young tableau of straight shape $\wgtSr$ whose reverse reading word
  is a lattice permutation.  This implies that all entries in row $j$
  of $S$ are equal to $j$.  In particular, $\wgtSr=\beta$ and $\beta$ satifies the conditions of 
Theorem~\ref {thm:Sundaram-coefficients}.
  Furthermore, is is required that the entries in row $j=n+i+1$ are
  at least $2i+2$ for $i\geq 0$.  It follows that $i+1\leq n$, and
  therefore $j\leq 2n$.
\end{proof}


\section{Proof of the main result}
\label{sec:proof}


In light of Sundaram's results, we claim that to prove Theorem~\ref{thm:main}
it suffices to demonstrate the following.

\begin{theorem}\label{thm:descents}
  Let $\Sun(\cT) = (Q, S)$.  Then
  \[
  \Des(\cT)=\Des(Q).
  \]
\end{theorem}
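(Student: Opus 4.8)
My plan is to prove the sharper, local assertion that for each $k\in\{1,\dots,r-1\}$ the index $k$ is a descent of $\cT$ if and only if the entry $k+1$ occupies a strictly lower row than the entry $k$ in $Q$; summing over $k$ then yields $\Des(\cT)=\Des(Q)$. This is a genuine reduction, since $\Des(\cT)$ depends only on the types (expansion or contraction) of the two consecutive steps $k$ and $k+1$ together with the relative rows of $b_k$ and $b_{k+1}$, while, by Definition~\ref{sec:des-partial}, $\Des(Q)$ depends only on the relative rows of $k$ and $k+1$. Everything thus comes down to tracking, for two adjacent steps of Sundaram's algorithm, where the labels $k$ and $k+1$ are recorded in $Q$.

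First I would write out $\Sun$ explicitly as the promised insertion-and-deletion procedure, scanning $\cT$ from left to right. Each step creates exactly one new cell of $Q$, labelled by the current index: an expansion does so by a Schensted-type insertion, a contraction by the corresponding inverse bumping combined with a jeu-de-taquin slide. Since $|\wgtSr|=r$, the labels $1,\dots,r$ fill $Q$ and its shape grows by one box at every step. I would simultaneously record the intermediate data of Definition~\ref{sec:des-partial}: the surviving cells assemble into the partial tableau $T$, and the cancelled steps pair up into the fixed-point-free involution $\iota$. At this stage the containment $\Des(\iota)\cup\Des(T)\subseteq\Des(\cT)$ is immediate from the definitions, and the task is to upgrade it to the equality with $\Des(Q)$.

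The heart of the argument is a case analysis on the pair $(k,k+1)$, split into expansion-expansion, contraction-contraction, expansion-contraction, and contraction-expansion. The two pure cases I expect to follow the template of Schützenberger's theorem for two consecutive insertions and, dually, two consecutive deletions: in the expansion-expansion case the recording cell of $k+1$ should lie strictly below that of $k$ exactly when $b_k$ is strictly above $b_{k+1}$, reproducing the second clause of Definition~\ref{def:desosctab}; in the contraction-contraction case the analogous comparison holds with the inequality reversed, reproducing the third clause, the reversal reflecting that the bumping route of a deletion is read in the opposite vertical direction. In the mixed case expansion-then-contraction I would show that a contraction always records its cell strictly below the cell recorded by an immediately preceding expansion, so $k$ is always a descent, matching the first clause; and in the case contraction-then-expansion I would show the opposite inequality, so $k$ is never a descent.

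The main obstacle will be precisely these mixed cases, and more generally any adjacent pair one of whose members is a surviving cell of $T$ while the other is an endpoint of $\iota$: for such a pair the relative position of $k$ and $k+1$ in $Q$ is controlled neither by $\Des(T)$ nor by $\Des(\iota)$ alone, but by the way the jeu-de-taquin slides of $\Sun$ interleave the surviving and the cancelled cells, which is inherently non-local. To tame this I would follow the insertion and deletion routes of steps $k$ and $k+1$ and argue, as in the classical setting, that because the two steps are adjacent in time their routes can meet only in a manner dictated by the single comparison between $b_k$ and $b_{k+1}$, so that all cells created at earlier times are irrelevant to that comparison. Making this routing statement precise, and confirming that it forces the descent rule of Definition~\ref{def:desosctab} in every case, is where the real work lies.
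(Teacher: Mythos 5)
Your reduction of the theorem to the local statement ``$k$ is a descent of $\cT$ iff $k+1$ lies strictly below $k$ in $Q$'' is harmless, since that is just Definition~\ref{sec:des-partial} unwound. The problem is with the model of $\Sun$ on which the rest of your argument rests. Sundaram's map is not a left-to-right scan in which each step of $\cT$ immediately creates the cell of $Q$ labelled by the current index: it factors as $\cT\mapsto(\iota,T)\mapsto(I,T)\mapsto(Q,S)$, and at a contraction the first stage \emph{deletes} a cell of the running tableau and records a pair of the involution $\iota$; the cell of $Q$ carrying that label is only determined afterwards, by applying Robinson--Schensted to all of $\iota$ to get $I$ and then column-inserting $w^{\rev}(I)$ into $T$. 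So there is no per-step ``recording cell'' whose position you can compare for $k$ and $k+1$, and your claim that the containment $\Des(\iota)\cup\Des(T)\subseteq\Des(\cT)$ is ``immediate from the definitions'' is not: it is exactly the content of a nontrivial case analysis. You correctly identify the resulting non-locality (labels split between $T$ and $\iota$, and the mixed adjacent pairs) as the main obstacle, but you leave it unresolved, so the proposal is a plan rather than a proof.

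The missing idea is an intermediate descent statistic that makes the problem factor along the three stages of $\Sun$. Define $\Des(\iota,T)=\Des(\iota)\dotcup\Des(T)\dotcup\Des(T/\iota)$, where $\Des(T/\iota)$ collects the $k$ with $k\in T$ and $k+1$ in the domain of $\iota$. One first proves $\Des(\cT)=\Des(\iota,T)$ by the four-way case analysis on steps $k$ and $k+1$ (this is where the properties of column deletion enter, e.g.\ that bumping out a lower box expels a lower letter). One then converts $\Des(\iota,T)$ into $\Des(Q)$ using two standard facts: Sch\"utzenberger's theorem $\Des(\pi)=\Des(Q_{\RS}(\pi))$, applied to the involution $\iota$ whose RS image is $(I,I)$, turns $\Des(\iota)$ into $\Des(I)$; and the fact that the descent set of the column-insertion tableau of a word $w$ is $\{k:\ k\ \text{is left of}\ k+1\ \text{in}\ w\}$, combined with $w^{\rev}(T)$ column-inserting to $T$, shows that column-inserting the concatenation of $w^{\rev}(T)$ and $w^{\rev}(I)$ produces $Q$ with $\Des(Q)=\Des(I)\dotcup\Des(T)\dotcup\Des(T/I)$. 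In particular your troublesome mixed pairs are handled for free: every letter of $T$ precedes every letter of $I$ in that concatenation, so $k\in T$, $k+1\in I$ always gives a descent of $Q$ and the reverse configuration never does. Without some such factorization, the ``routing'' argument you defer to has no footing, because the route of a label through $\RS(\iota)$ followed by column insertion into $T$ is not controlled by the single comparison of $b_k$ with $b_{k+1}$.
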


\begin{proof}[Proof of Theorem~\ref{thm:main}]
We have
\[
\begin{array}{rcll}
\displaystyle\sum_\cT \fQ_{\Des(\cT)}
&=&\displaystyle\sum_{(Q,S)} \fQ_{\Des(Q)}
&\hspace{-24pt}\text{(by Theorems~\ref{thm:Sundaram-bijection}
  and~\ref{thm:descents})}\\[20pt]
&=&\displaystyle\sum_{|Q|=r}\sum_{\beta} c_{\mu,\beta}^{\sh Q}(n)  \fQ_{\Des(Q)}
&\hspace{-24pt}\text{(by Definition~\ref{def:symplectic-LR})}\\[20pt]
&=&\displaystyle\sum_{|Q|=r} a(\sh(Q),\mu) \fQ_{\Des(Q)}
&\hspace{-24pt}\text{(by Theorem~\ref{thm:Sundaram-coefficients})}\\[20pt]
&=&\displaystyle\sum_{\wgtSr\in P(r)} a(\wgtSr,\mu)
\sum_{Q\in\SYT(\wgtSr)} \fQ_{\Des(Q)}
&\text{}\\[20pt]
&=&\ch U(r,\mu)
&\hspace{-24pt}\text{(by equations~\eqref{eq:quasisymmetric} and~\eqref{eq:Frobenius})}
\end{array}
\]
which is the desired conclusion.
\end{proof}

In order to prove Theorem~\ref{thm:descents}, we will need to analyse the bijection $\Sun$ in detail.  Sundaram described  $\Sun$ as the composition of
several bijections.  Specifically, $\Sun$ is the composition
\[
\cT\stackrel{\Sun_1}{\mapsto} (\iota,T)\stackrel{\RS}{\mapsto} (I,T)
\stackrel{\Sun_2}{\mapsto} (Q,S)
\]
where $\Sun_1$ and $\Sun_2$ are described below and, as usual, $\RS$ denotes the
Robinson-Schensted correspondence.  We will prove
Theorem~\ref{thm:descents} by tracking the effect  of these maps on the descent
set.

\subsection{Sundaram's first bijection}

We now describe Sundaram's first bijection which we will denote by
$\Sun_1$.  It maps an oscillating tableau $\cT$ to a pair $(\iota,T)$
where $\iota$ is a fixed-point-free involution, $T$ is a partial
Young tableau, and the entries of
$\iota$ and $T$ are complementary sets.

Let $\cT=(\emptyset\!=\!\wgtCn^0,\wgtCn^1,\dots,\wgtCn^r)$ be an
oscillating tableau.  We then construct a sequence of pairs
$(\iota_k,T_k)$ for $0\leq k\leq r$ such that $T_k$ has shape $\sh(T_k)=\wgtCn^k$,
and the entries of $\iota_k$ and of $T_k$ form a set partition of
$\{1,\dots,k\}$.

Both $\iota_0$ and $T_0$ are empty.  For $k>0$ the pair
$(\iota_k,T_k)$ is constructed from the pair $(\iota_{k-1},T_{k-1})$
and the $k$-th step in the oscillating tableau.
\begin{itemize}
\item If the $k$-th step is an expansion then $\iota_k=\iota_{k-1}$
  and $T_k$ is obtained from $T_{k-1}$ by putting $k$ in box $b_k$.  
\item If the $k$-th step is a contraction then take
  $T_{k-1}$ and bump out (using Robinson-Schensted column deletion) the entry in box
  $b_k$ to get a letter $x$ and the partial Young tableau $T_k$.  
   The involution $\iota_k$ is $\iota_{k-1}$ with the
 transposition $(x,k)$ adjoined.
\end{itemize}
The result of the bijection is the final pair $(\iota_r,T_r)$.

\begin{lemma}[Sundaram~\protect{\cite[Lemma~8.7]{MR2941115}}]
  The map $\Sun_1$ is a bijection between oscillating
  tableaux of length $r$ and shape $\wgtCn$ and pairs $(\iota, T)$
  where
  \begin{itemize}
  \item $\iota$ is a fixed-point-free involution of a set
    $A\subseteq\{1,\dots,r\}$, and
  \item $T$ is a partial tableau of shape $\wgtCn$ such that its set
    of entries is $\{1,\dots,r\}\setminus A$.
  \end{itemize}
\end{lemma}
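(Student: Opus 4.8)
The plan is to check that the recursive construction is well defined and lands in the claimed target, to carry an invariant through the recursion, and then to produce an explicit inverse by running the recursion backwards; the only genuinely nontrivial ingredient will be the reversibility of Schensted column insertion, which I will quote from standard Robinson--Schensted theory.

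First I would prove, by induction on $k$, the invariant that $(\iota_k,T_k)$ is a \emph{valid pair}: $\iota_k$ is a fixed-point-free involution, $T_k$ is a partial tableau with $\sh(T_k)=\wgtCn^k$, and the domain of $\iota_k$ together with the entry set of $T_k$ partition $\{1,\dots,k\}$. The base case $k=0$ is immediate. For an expansion step, $b_k$ is an addable corner of $\wgtCn^{k-1}=\sh(T_{k-1})$, and since every entry of $T_{k-1}$ lies in $\{1,\dots,k-1\}$ the value $k$ exceeds all of them; placing $k$ at the corner $b_k$ therefore keeps rows and columns increasing, so $T_k$ is a partial tableau of shape $\wgtCn^k$ and the partition property persists with $\iota_k=\iota_{k-1}$. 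For a contraction step, column deletion from the removable corner $b_k$ of $T_{k-1}$ produces a partial tableau $T_k$ of shape $\wgtCn^k=\wgtCn^{k-1}\setminus\{b_k\}$ together with an ejected value $x$; since $x$ was an entry of $T_{k-1}$ we have $x\le k-1<k$ and $x$ lies outside the domain of $\iota_{k-1}$, so adjoining the transposition $(x,k)$ gives a fixed-point-free involution $\iota_k$ whose domain, with the entries of $T_k$, again partitions $\{1,\dots,k\}$. Taking $k=r$ shows that $\Sun_1(\cT)=(\iota_r,T_r)$ is a pair of exactly the required form, with $\sh(T_r)=\wgtCn$ and entry set the complement of the domain $A$ of $\iota_r$.

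Next I would construct the inverse. The key observation, immediate from the partition invariant, is that for each $k$ the index $k$ is an entry of $T_k$ precisely when step $k$ was an expansion, and lies in the domain of $\iota_k$ precisely when step $k$ was a contraction; this lets the recursion be read off from the data alone. Given a target pair $(\iota,T)$ I would recover the sequence $(\iota_k,T_k)$ by reverse induction from $k=r$ down to $k=0$, maintaining a current involution and current tableau initialised to $(\iota,T)$: if the index $k$ is an entry of the current tableau then, being its maximal entry, it occupies a removable corner $b_k$, and I delete it and declare step $k$ an expansion; if instead $k$ lies in the domain of the current involution, then its partner $x$ is necessarily smaller than $k$ (its larger partner, had there been one, would already have been removed when processed earlier), and I remove the transposition $(x,k)$, column-insert $x$ into the current tableau to create the new corner $b_k$, and declare step $k$ a contraction. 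Recording the successive shapes produces an oscillating tableau of length $r$ and shape $\wgtCn$.

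The main obstacle, and the heart of the matter, is showing that the forward and backward procedures are mutually inverse, which I would reduce to the standard fact that Schensted column insertion and column deletion are inverse operations: inserting a letter $x$ into a partial tableau and then column-deleting the freshly created corner returns $x$ and the original tableau, and conversely. Granting this, reversing an expansion exactly undoes the placement of $k$, while reversing a contraction exactly undoes the column deletion of step $k$, so the backward recursion carries $(\iota_k,T_k)$ to $(\iota_{k-1},T_{k-1})$ at every stage; together with the case-distinction observation above, this identifies the two maps as inverse bijections. I would therefore isolate the column-insertion/deletion reversibility as the single external lemma to cite and treat the remainder as bookkeeping on the invariant.
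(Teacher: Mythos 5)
The paper does not prove this lemma at all: it is quoted verbatim from Sundaram's thesis (\cite[Lemma~8.7]{MR2941115}) with no argument supplied. Your proposal is a correct reconstruction of the standard proof: the partition invariant on $(\iota_k,T_k)$ is exactly what makes the case distinction (is $k$ in the tableau or in the involution?) readable from the data at stage $k$, and the only nontrivial external input is indeed that column insertion and column deletion are mutually inverse, which lets each backward step undo the corresponding forward step. The one point worth making fully explicit in a write-up is the observation you put in parentheses --- that an index $k$ whose partner $\iota(k)$ exceeds $k$ gets re-inserted into the tableau when the backward recursion processes $\iota(k)$, so that by the time stage $k$ is reached the case distinction correctly reports an expansion --- but as stated your argument covers this and I see no gap.
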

In general it seems that, given the pair $(\iota, T)$, there is no
straightforward way to determine whether the corresponding
oscillating tableau is $n$-symplectic, see
\cite[Lemma~9.3]{MR2941115}.  However, when $\wgtCn=\emptyset$ the
map $\Sun_1$ is in fact a bijection between $n$-symplectic
oscillating tableaux and $(n+1)$-nonnesting perfect matchings of
$\{1,2,\dots,r\}$, see~\cite{ChenDengDuStanleyYan2006}.

\begin{example}\label{ex:sun1}
  Starting with the oscillating tableau $\cT$ from
  Example~\ref{ex:oscillating-tableaux}, we get the following sequence
  of pairs $(T_k,i_k)$, where in the diagram below we only list each
  pair of the involution once when it is produced by the algorithm.

  \[
  \begin{array}{lcccccccccc}
    \text{$k$:}  &0&1&2&3&4&5&6&7&8&9\\
    \totop{\text{$T_k$:}}%
    &\totop{\emptyset}
    &\totop{\young(1)}
    &\totop{\young(1,2)} 
    &\totop{\young(13,2)}
    &\totop{\young(13)}
    &\totop{\young(3)}
    &\totop{\young(36)}
    &\totop{\young(36,7)}
    &\totop{\young(36,7,8)}
    &\totop{\young(36,7)}\\
    \text{$\iota_k$:}%
    &\emptyset
    &&&&(2,4)&(1,5)&&&&(8,9)
  \end{array}
  \]

So the final output is the involution
\[
\iota =(1,5)(2,4)(8,9)=%
\begin{array}{cccccc}
1&2&4&5&8&9\\
5&4&2&1&9&8
\end{array}
\]
and the partial tableau
\[
\Yvcentermath1
T=\young(36,7)
\]
with $\iota\dotcup T=\{1,\dots,9\}$.
\end{example}

We wish to define the descent set of a pair $(\iota,T)$ in such a way
that this map preserves descents.  Let us define for any pair 
$A$ and $B$
\[
\Des(A/B)=\{k : \text{$k\in A$ and $k+1\in B$}\}.
\]
and
\[
\Des(\iota,T)=\Des(\iota)\dotcup \Des(T)\dotcup \Des(T/\iota),
\]
where we use Definition~\ref{sec:des-partial} for the descent sets of
$\iota$ and $T$.  Thus, in our running example,
\[
\Des(\iota)=\{1,4,8\},\
\Des(T)=\{6\},\
\Des(T/\iota)=\{3,7\},
\] 
so that $\Des(\iota, T)=\{1,3,4,6,7,8\}$, which
coincides with $\Des(\cT)$.

We can now take our first step in proving Theorem~\ref{thm:descents}.
\begin{proposition}\label{prop:Descents-Sun1}
  Let $\cT$ be an oscillating tableau and suppose that
  $\Sun_1(\cT)=(\iota, T)$.  Then
  \[ 
  \Des(\cT) = \Des(\iota,T).
  \]
\end{proposition}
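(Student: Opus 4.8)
The plan is to process Sundaram's bijection $\Sun_1$ one step at a time and to prove, by induction on the number of processed steps $j$, that the descent set of the truncated oscillating tableau $(\wgtCn^0,\dots,\wgtCn^j)$ equals $\Des(\iota_j,T_j)$, where all descents are recorded as subsets of $\{1,\dots,j-1\}$. The threefold disjoint union $\Des(\iota_j,T_j)=\Des(\iota_j)\dotcup\Des(T_j)\dotcup\Des(T_j/\iota_j)$ is organised exactly according to where the two entries $k,k+1$ ultimately sit: both in $T$, both in $\iota$, or split with the smaller one in $T$ and the larger in $\iota$. So the combinatorial heart of the argument is to show that each step records a new descent at position $j-1$ correctly and, crucially, that a contraction step does not corrupt any descent already recorded at a position $\le j-2$.

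The first observation is that expansions are harmless: an expansion step $j$ leaves $\iota$ unchanged and simply places the new maximal letter $j$ into the freshly added corner $b_j$, moving no existing entry. Hence the only position whose status can change is $j-1$, and comparing the rows of $b_{j-1}$ and $b_j$ (when step $j-1$ is also an expansion) or noting that $j-1$ has already passed into $\iota$ (when step $j-1$ is a contraction) shows that $\Des(T_j)$ gains position $j-1$ precisely when $\cT$ has a descent there. For the two mixed configurations of steps $j-1,j$ one never needs to look at rows at all: each contraction at a step $m$ creates a transposition $(x,m)$ with $x<m$, since the popped letter $x$ is an earlier entry. A short comparison of indices then settles the case where step $j-1$ is a contraction and step $j$ an expansion (the pair is never a descent of $\cT$, and either $\iota(j-1)<\iota(j)$ or $j\in T$, so it is not recorded) and the case where step $j-1$ is an expansion and step $j$ a contraction (always a descent of $\cT$, always recorded, either by $\Des(T_j/\iota_j)$ if $j-1$ survives in $T$, or by $\Des(\iota_j)$ if $j-1$ was bumped out at step $j$).

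The real work, and the main obstacle, is the contraction step together with the two same-type configurations. Here I would isolate the key structural fact that the letter $x$ popped out during a column deletion occupies the first column of $T_{j-1}$. Because the first column is strictly increasing, this forces $x-1$ (when still present) to lie strictly above $x$, and $x+1$ (when present) to lie weakly above $x$; these two inequalities are exactly what is needed to verify that positions $x-1$ and $x$ retain their descent status as $x$ migrates from $T$ to $\iota$, so that a would-be $\Des(T)$ descent at one of these slots reappears as a $\Des(T/\iota)$ descent, and a $\Des(T/\iota)$ descent reappears in $\Des(\iota)$. It then remains to establish two properties of the reverse column-insertion bumping chain. First, column deletion preserves the descent relation of every surviving consecutive pair $p,p+1$ with $p,p+1\neq x$, even though individual entries may change rows. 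Second, for the configuration in which both steps $j-1,j$ are contractions, two successive deletions of corners in rows $s$ and $t$ pop values $x'$ and $x''$ with $x'>x''$ if and only if $s>t$, which matches the descent condition "$b_{j-1}$ strictly below $b_j$''.

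I would prove both of these by a direct analysis of the bumping route: the displaced letters form a strictly decreasing sequence meeting one box per column, so one can track precisely how (and whether) a consecutive pair or the chain of popped values interacts with it. I expect this route analysis to be the most delicate part of the argument, since it is the only place where the fine geometry of Robinson--Schensted column deletion is genuinely used. As a sanity check for the bookkeeping, the running example is instructive: at step $9$ the position $7$ passes from $\Des(T)$ into $\Des(T/\iota)$ as the entry $8$ is popped into $\iota$, while every earlier recorded descent is left untouched, exactly as the preservation claim predicts.
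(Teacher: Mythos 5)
Your proposal is correct and, at its mathematical core, coincides with the paper's proof: both arguments come down to the same four\nobreakdash-way case analysis on the types of two consecutive steps and to the same two facts about column insertion. The difference is organizational. The paper fixes a position $k$, looks at steps $k$ and $k+1$, and follows the entries $k$ and $k+1$ through \emph{all} subsequent column deletions to the end of the algorithm; you instead run an induction on the number of processed steps and therefore must prove separately that a contraction does not corrupt descents already recorded. Your version makes explicit something the paper leaves implicit, namely the migration of a recorded descent from $\Des(T)$ to $\Des(T/\iota)$ to $\Des(\iota)$ as its entries are ejected, and your observation that the ejected letter $x$ sits in the first column localizes this migration neatly to the positions $x-1$ and $x$. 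Two remarks. First, the two bumping-route lemmas you defer are exactly the facts the paper also invokes without proof: your statement that two successive deletions at corners in rows $s$ and $t$ eject values $x'$ and $x''$ with $x'>x''$ if and only if $s>t$ is the column version of the row bumping lemma (the paper's ``well-known properties of $\RS$''), and your preservation property for a surviving pair $p,p+1$ follows from Lemma~\ref{lem:RS-col-properties}: if a word $\pi'$ column-inserts to $T_j$, then $\pi'x$ column-inserts to $T_{j-1}$, so the relative order of $p$ and $p+1$ in an inserting word, and hence the descent at $p$, is unchanged by the deletion. So no genuinely new route analysis is required. Second, a small imprecision: strict increase of the first column does not by itself place $x+1$ weakly above $x$; the entry $x+1$ could sit directly below $x$ in column $1$. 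To exclude this you must use that $x$ is the letter actually ejected, for then either $b_j$ is the bottom corner of column $1$ (which would be occupied by an entry $\geq x+1$, not by $x$) or the value reverse-bumped into column $1$ would have to lie strictly between $x$ and $x+1$, which is impossible. With that repaired, the argument is complete.
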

\begin{proof}
  We proceed by analysing the effect of two successive steps in the
  oscillating tableau.

  If step $k$ is an expansion and step $k+1$ is a contraction then
  $k+1\in\iota$ and $\iota(k+1)< k+1$.  Now $k$ either ends up in $T$
  or in $\iota$.  In the former case, $k\in\Des(T/\iota)$.  In the
  latter case $\iota(k)> k\ge \iota(k+1)$ and $k\in\Des(\iota)$.  In both
  cases this gives a descent of $(\iota,T)$.

  If step $k$ is a contraction and step $k+1$ is an expansion then
  $k\in\iota$ and $\iota(k)<k$.  Now either $k+1$ ends up in $T$ or in
  $\iota$.  In the former case $k\in\Des(\iota/T)$ rather than $\Des(T/\iota)$.  In the latter case
  $\iota(k)< k<k+1<\iota(k+1)$.  Neither of these cases gives a
  descent of $(\iota,T)$.

  If steps $k$ and $k+1$ are both contractions then $k,k+1\in\iota$.
  If $b_k$ is strictly below $b_{k+1}$ then, by well-known properties
  of $\RS$, the element removed when bumping out $b_k$ will be in a
  lower row than the one obtained when bumping out $b_{k+1}$.  Thus
  $\iota(k)>\iota(k+1)$ and $k\in\Des(\iota,T)$ as desired.  By a similar
  argument, if $b_k$ is weakly above $b_{k+1}$ then $\iota(k)<\iota(k+1)$
  and $k\not\in\Des(\iota,T)$.

  Now suppose  steps $k$ and $k+1$ are both expansions.  If $b_k$ is in a row
  strictly above $b_{k+1}$, then any column deletion will keep $k$ in
  a row strictly above $k+1$.  It follows that at the end we have one
  of three possibilities.  The first is that $k,k+1\in T$ and, as was
  just observed, we must have $k\in\Des(T)$.  If either element is
  removed, then $k+1$ must be removed first because the row condition
  forces $k+1$ to always be in a column weakly left of $k$.  So at
  the end we either have $k\in T$ and $k+1\in\iota$, or we have
  $\iota(k)>\iota(k+1)$.  So in every case $k\in\Des(\iota,T)$.
By a similar argument, if $b_k$ is in a row
  weakly below $b_{k+1}$, then $k\not\in\Des(\iota,T)$.

This completes the check of all the cases and the proof.
\end{proof}

\subsection{Robinson-Schensted}

The next step of the map $\Sun$ is to apply the Robinson-Schensted correspondence to the
fixed-point-free involution $\iota$ to obtain a partial Young tableau
$I$ with the same set of entries as $\iota$.  Let us first recall the following
well-known facts about the Robinson-Schensted map.
\begin{lemma}[\protect{\cite[Exercise~22a]{SymmetricGroup} and~\cite[Exercise~7.28a]{EC2}}]\label{lem:RSK-properties}
  Let $\pi$ be a permutation with $\RS(\pi)=(P,Q)$.  Then
\begin{itemize}
\item $\Des(\pi)=\Des(Q)$, and
\item $\pi$ is a fixed-point-free involution if and only if $P=Q$ and
  all columns of $Q$ have even length.
\end{itemize}
\end{lemma}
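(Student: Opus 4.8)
The plan is to prove the two assertions separately, since they rest on different structural features of $\RS$. Throughout I work with the row-insertion version of the correspondence and with the convention (used for standard tableaux in the paper) that $k$ is a descent of $Q$ when $k+1$ lies in a strictly lower row than $k$.

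For the equality $\Des(\pi)=\Des(Q)$, I would track, for each $i$, the box of $Q$ recording the $i$-th insertion. The key tool is Schensted's lemma on two consecutive insertions: if one row-inserts a letter $x$ into an arbitrary tableau and then row-inserts a letter $y$ into the result, then the box created by $y$ lies strictly to the right of and weakly above the box created by $x$ when $x<y$, and weakly to the left of and strictly below it when $x>y$. I would prove this by comparing the two bumping paths row by row, showing inductively that when $x<y$ the bumping route of $y$ remains strictly to the right of that of $x$ (with the successively bumped values staying correspondingly ordered), and the mirror statement when $x>y$. Applying this with $x=\pi_i$ and $y=\pi_{i+1}$, starting from the partial insertion tableau obtained after the first $i-1$ letters, shows that $i+1$ lies strictly below $i$ in $Q$ precisely when $\pi_i>\pi_{i+1}$. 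This is exactly $\Des(\pi)=\Des(Q)$.

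For the second assertion I would split it into the involution statement and the fixed-point-free statement. The involution part rests on the symmetry theorem $\RS(\pi^{-1})=(Q,P)$: since $\RS$ is a bijection, $\pi=\pi^{-1}$ holds if and only if $(P,Q)=(Q,P)$, i.e. $P=Q$. For the claim that, among involutions, fixed-point-freeness is equivalent to every column of the common tableau having even length, I would invoke the classical fact that the number of fixed points of an involution equals the number of odd-length columns of its tableau; then having $0$ fixed points is equivalent to having no odd columns, that is, all columns of even length.

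Both ingredients of the second part become transparent in the matrix (growth-diagram) formulation of $\RS$ that the paper takes up in its final section: a permutation corresponds to a permutation matrix, passing to $\pi^{-1}$ transposes it, RSK is symmetric under transposition, and this sends $(P,Q)$ to $(Q,P)$; an involution corresponds to a symmetric matrix, whose diagonal $1$'s are exactly the fixed points, and a parity/induction argument on the diagonal entries matches these with the odd columns of the shape. I expect the only genuinely delicate step to be making Schensted's consecutive-insertion lemma airtight, since the bookkeeping of how the two bumping paths interleave row by row is where all the work lies; the symmetry theorem and the fixed-point count can be quoted as standard (due to Schützenberger) once this framework is in place.
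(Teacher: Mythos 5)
The paper does not prove this lemma at all: it is quoted as a known result, with the proof delegated to the cited textbook exercises (Sagan, Exercise~22a, and Stanley, Exercise~7.28a). Your argument is correct and is exactly the standard one those references intend: Fulton/Schensted's row-bumping lemma (the new box is strictly right and weakly above for an increasing pair, weakly left and strictly below for a decreasing pair) gives $\Des(\pi)=\Des(Q)$; the symmetry theorem $\RS(\pi^{-1})=(Q,P)$ together with bijectivity gives the equivalence of being an involution with $P=Q$; and Sch\"utzenberger's count of fixed points by odd-length columns finishes the fixed-point-free characterization. You have correctly identified the one step that needs real work (the interleaving of the two bumping routes), and your reduction of the second bullet to the two classical facts is sound, so there is nothing to object to beyond the fact that the paper itself supplies no proof to compare against.
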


Combining Proposition~\ref{prop:Descents-Sun1} with this lemma we
obtain the following result.
\begin{proposition}\label{prop:Descents-2}
  Let $\cT$ be an oscillating tableau, let $\Sun_1(\cT)=(\iota, T)$
  and let $\RS(\iota) = (I, I)$.  Then
  \[ 
  \Des(\cT) = \Des(I) \dotcup \Des(T) \dotcup \Des(T/I).
  \]
\end{proposition}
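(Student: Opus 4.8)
The plan is to obtain this statement as an immediate consequence of Proposition~\ref{prop:Descents-Sun1} together with Lemma~\ref{lem:RSK-properties}. Proposition~\ref{prop:Descents-Sun1} already gives $\Des(\cT) = \Des(\iota) \dotcup \Des(T) \dotcup \Des(T/\iota)$, so it suffices to check that passing from $\iota$ to its Robinson--Schensted image $I$ replaces the term $\Des(\iota)$ by $\Des(I)$ and the term $\Des(T/\iota)$ by $\Des(T/I)$, while leaving $\Des(T)$ untouched.

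For the first term, I would invoke the first bullet of Lemma~\ref{lem:RSK-properties}: since $\RS(\iota) = (I, I)$, the descent set of the permutation $\iota$ equals the descent set of its recording tableau, which here is $I$ itself. Hence $\Des(\iota) = \Des(I)$. The hypothesis that the image has the repeated form $(I,I)$ with even columns is precisely what the second bullet of the lemma guarantees for a fixed-point-free involution, so there is nothing to verify there.

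For the coupling term, the key observation is that $\Des(A/B)$, as defined, depends only on the underlying entry sets of $A$ and $B$ and not on any internal tableau or permutation structure. Since the Robinson--Schensted correspondence preserves the set of entries, the partial tableau $I$ has exactly the same entry set as the involution $\iota$, namely the complement in $\{1,\dots,r\}$ of the entries of $T$. Consequently $\Des(T/\iota) = \Des(T/I)$, and because the entry sets of $T$ and $I$ partition the same set that $T$ and $\iota$ did, the union remains disjoint with the same three-way membership pattern on $k$ and $k+1$.

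I do not anticipate any genuine obstacle here: both substitutions are direct, one from the descent-preservation property of $\RS$ and the other from the purely set-theoretic nature of the coupling term. The only point requiring a moment's care is recording that $\RS$ preserves the entry set, which is what allows the $\Des(T/\iota)$ term to survive the passage to $I$ verbatim.
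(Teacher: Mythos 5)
Your proposal is correct and matches the paper's own argument, which likewise obtains the result by combining Proposition~\ref{prop:Descents-Sun1} with Lemma~\ref{lem:RSK-properties}; the paper simply states the combination without spelling out the two substitutions you make explicit. Your added observation that $\Des(T/\iota)=\Des(T/I)$ because the coupling term depends only on entry sets is exactly the (implicit) detail needed, so there is nothing to correct.
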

\begin{example}\label{ex:robinson-schensted}
  The fixed point free involution from Example~\ref{ex:sun1} is
  mapped to the tableau
  \[
  \Yvcentermath1
  I=\young(18,29,4,5)
  \]
\end{example}

\subsection{Sundaram's second bijection}

To finish defining $\Sun$,  we need a bijection $\Sun_2$ that transforms the pair
of partial  Young tableaux $(I, T)$ to a pair $(Q, S)$ as in
Theorem~\ref{thm:Sundaram-bijection}.

Let $Q$ be the standard Young tableau of shape $\wgtSr$ obtained by
\emph{column-inserting} the reverse reading word of the tableau $I$
into the tableau $T$.  Also construct a skew semistandard Young
tableau $S$ as follows: whenever a letter of $w^{rev}(I)$ is
inserted, record its row index in $I$ in the box which is added.
Then $\Sun_2(I, T) = (Q, S)$.
Sundaram actually defines this  map for pairs $(I, T)$ of semistandard
  Young tableaux.  But we will not need this level of generality.

\begin{lemma}[Sundaram~\protect{\cite[Theorem~8.11,
    Theorem~9.4]{MR2941115}}]
  The map $\Sun_2$ is a map from pairs of partial Young
  tableaux $(I, T)$ of shapes $\beta$ and $\wgtCn$, respectively,
  such that $I\dotcup T=\{1,\dots,r\}$ to pairs $(Q, S)$ such that
  \begin{itemize}
  \item $Q$ is a standard Young tableau of shape $\wgtSr\in P(r)$ and
  \item $S$ is a semistandard tableau of
    shape $\wgtSr/\wgtCn$ and weight $\beta$.
  \end{itemize}
  Furthermore, $\Sun_2$ becomes a bijection if one restricts the
  domain to those $(I,T)$ which correspond by $\Sun_1$ to an
  $n$-symplectic oscillating tableau, and the range to those $(Q,S)$
  where $S$ is an $n$-symplectic Littlewood-Richardson tableau.
\end{lemma}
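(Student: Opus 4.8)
The plan is to break the lemma into three parts: that the output pair $(Q,S)$ has the asserted shapes, standardness, and weight; that $\Sun_2$ inverts to recover $(I,T)$; and that it carries the $n$-symplectic inputs exactly onto those $(Q,S)$ whose $S$ is an $n$-symplectic Littlewood--Richardson tableau. The first two parts are classical insertion theory, and I would treat them quickly; the third is the genuinely symplectic ingredient and is where I expect the real difficulty to lie.

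For the well-definedness part, observe that since $I\dotcup T=\{1,\dots,r\}$ all letters being column-inserted are distinct and disjoint from the entries of $T$, so column insertion preserves the standard-tableau property and, starting from the straight shape $\sh(T)=\wgtCn$, adds boxes one at a time to produce a standard Young tableau $Q$ of straight shape $\wgtSr$ with $|\wgtSr|=|\wgtCn|+|\beta|=r$. The recorded cells are exactly the boxes added to $\wgtCn$, so $\sh(S)=\wgtSr/\wgtCn$, and since row $i$ of $I$ contributes $\beta_i$ inserted letters, each recorded as $i$, the tableau $S$ has weight $\beta$. For semistandardness I would use that $w^{rev}(I)$ processes the rows of $I$ from top to bottom, each read right-to-left: a single row read backwards is strictly decreasing, and column-inserting a decreasing run adds a horizontal strip, so the cells recorded with a common value $i$ form a horizontal strip while successive values $i$ are recorded in weakly increasing order. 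The standard stacking argument for recording tableaux then yields that $S$ is semistandard.

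For invertibility I would reverse the construction cell by cell: the cells of $S$ mark the boxes added during insertion and their values determine the insertion order, so reverse column insertion performed on these cells in the reverse of that order recovers the inserted letters, hence $I$ (sorting the letters recorded $i$ into row $i$), and leaves $T$ behind. The same reversibility shows that the image consists exactly of pairs $(Q,S)$ for which the reverse reading word of $S$ is a lattice permutation; this is the classical fact underlying the insertion proof of the Littlewood--Richardson rule, namely that the row-index recording tableau produced by inserting the reverse reading word of a tableau is of this ballot type. Since moreover $I=\RS(\iota)$ for a fixed-point-free involution $\iota$, Lemma~\ref{lem:RSK-properties} gives that every column of $\beta=\sh(I)$ has even length. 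Thus every clause in the definition of an $n$-symplectic Littlewood--Richardson tableau is accounted for except the bound on entries by row.

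The main obstacle is precisely that remaining clause: that $\cT$ is $n$-symplectic if and only if every entry in row $n+i+1$ of $S$ is at least $2i+2$, equivalently (as noted after Definition~\ref{def:symplectic-LR}) that $2i+1$ occurs no lower than row $n+i$. Here the plan is to translate the $n$-symplectic hypothesis — that every shape $\wgtCn^k$, hence every partial tableau $T_k$ produced by $\Sun_1$, has at most $n$ rows — into a statement about the row indices recorded in $S$. I would track, through the column deletions defining $\Sun_1$ and the passage to $I=\RS(\iota)$, how the bound of $n$ rows on the $T_k$ forces the deep row indices of $I$ (those producing the large entries of $S$) to occur only in correspondingly deep rows of $S$, and conversely how a violation of the entry bound would exhibit a partial tableau of more than $n$ rows somewhere along the sequence. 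Establishing this monotone correspondence between the depth reached by the partial tableaux and the size of the recorded entries is the crux, and is exactly where Sundaram's analysis of the interaction between column insertion/deletion and the height restriction does the real work.
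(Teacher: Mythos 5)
The paper itself offers no proof of this lemma --- it is quoted verbatim from Sundaram's thesis (Theorems~8.11 and~9.4 of \cite{MR2941115}) --- so there is no in-paper argument to measure you against; what follows assesses your sketch on its own terms. Your first two parts are fine: the shape, standardness and weight claims, the semistandardness of $S$ via horizontal strips, the recoverability of $(I,T)$ by reverse column insertion, the lattice-word property of $w^{\rev}(S)$, and the even-column condition on $\beta$ coming from Lemma~\ref{lem:RSK-properties} applied to the fixed-point-free involution $\iota$ are all correctly reduced to classical insertion theory (essentially the bijective proof of the Littlewood--Richardson rule). You have also correctly isolated the one remaining clause of Definition~\ref{def:symplectic-LR}, the row-dependent lower bound on the entries of $S$.

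The genuine gap is that this remaining clause is exactly the theorem, and your treatment of it is a statement of intent rather than an argument. The proposed ``monotone correspondence between the depth reached by the partial tableaux and the size of the recorded entries'' is not established, and there is reason to doubt it can be obtained by straightforward tracking: the paper itself points out, immediately after stating $\Sun_1$, that ``given the pair $(\iota,T)$, there is no straightforward way to determine whether the corresponding oscillating tableau is $n$-symplectic,'' referring to Sundaram's Lemma~9.3, which is the technical heart of her proof. The $n$-symplectic condition on $\cT$ does not localize to $I$ or to $T$ separately but is encoded in a joint condition on $(\iota,T)$, and only after the further passage through $\RS$ and $\Sun_2$ does it become the entry bound ``row $n+i+1$ contains only entries $\geq 2i+2$.'' The paper's discussion of the typographical error in Sundaram's own definition (where an off-by-one in the index range would break the bijection, as the $1$-symplectic counting example shows) underlines that the precise indices here are delicate; a proof that does not engage with them cannot be verified. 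As written, your proposal proves the ``unrestricted'' map and its invertibility but not the restricted bijection, which is the assertion actually used in the paper.
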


\begin{example}
  The tableau $I$ from Example~\ref{ex:robinson-schensted} has
  $w^{\rev}(I)=819245$.  Column inserting this word into the tableau
  $T$ from Example~\ref{ex:sun1} and recording the row indices yields
  the pair of tableaux
  \[
  (Q,S)=%
  \left(\;%
    \Yvcentermath1%
    \young(136,27,48,59), \ %
    \young(\hfil\hfil1,\hfil2,13,24)%
    \;
  \right).
  \]
Note that the
 $\Des(Q)=\{1,3,4,6,7,8\}=\Des(\cT)$ as desired.
\end{example}

To prove Theorem~\ref{thm:descents} we need two properties of column
insertion.
\begin{lemma}\label{lem:RS-col-properties}
  Let $\pi$ be a permutation and suppose that $T$ is obtained by
  column inserting $\pi$.  Then 
  \[
  \Des(T) = \{k: \text{$k$ is left of $k+1$ in the $1$-line notation
    for $\pi$}\}.
  \]
  Furthermore, column inserting the reverse reading word
  $w^{\rev}(T)$ of $T$ yields $T$ itself.
\end{lemma}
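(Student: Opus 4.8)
The plan is to reduce both assertions to three standard properties of the Robinson--Schensted map, exploiting the fact that column insertion is nothing but row insertion ``transposed''. Writing $P_{\mathrm{row}}(u)$ and $P_{\mathrm{col}}(u)$ for the insertion tableaux produced by row- and column-inserting a word $u$, I will use: (a) $P_{\mathrm{col}}(u)=P_{\mathrm{row}}(u)^{t}$, where ${}^{t}$ denotes transpose; (b) $P_{\mathrm{row}}(w(T))=T$ for the reading word $w(T)$, which is the fact already recalled after Definition~\ref{def:symplectic-LR}; and (c) $P_{\mathrm{row}}(u^{\rev})=P_{\mathrm{row}}(u)^{t}$, i.e.\ reversing a word transposes its insertion tableau. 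All three are classical and may be found in the references of Lemma~\ref{lem:RSK-properties}.

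For the first assertion I would first record a purely combinatorial fact about a standard Young tableau $U$ with $r$ boxes, namely $\Des(U^{t})=\{1,\dots,r-1\}\setminus\Des(U)$. This holds because for consecutive entries $k$ and $k+1$ the box of $k+1$ can be neither weakly north-west of the box of $k$ (rows and columns strictly increase) nor strictly south-east of it (otherwise the box lying in the row of $k+1$ and the column of $k$ would have to carry a value strictly between $k$ and $k+1$). Hence exactly one of the two events ``$k+1$ lies in a lower row'' and ``$k+1$ lies in a strictly later column'' occurs, and these events define $\Des(U)$ and $\Des(U^{t})$ respectively. Combining this with (a) gives $\Des(T)=\Des\bigl(P_{\mathrm{row}}(\pi)^{t}\bigr)=\{1,\dots,r-1\}\setminus\Des\bigl(P_{\mathrm{row}}(\pi)\bigr)$. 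Finally, applying the first item of Lemma~\ref{lem:RSK-properties} to $\pi^{-1}$ and using the symmetry $Q(\pi^{-1})=P(\pi)$ of the Robinson--Schensted map yields $\Des\bigl(P_{\mathrm{row}}(\pi)\bigr)=\Des(\pi^{-1})=\{k:\pi^{-1}(k)>\pi^{-1}(k+1)\}$, whose complement is exactly $\{k:\text{$k$ is left of $k+1$ in $\pi$}\}$, as desired.

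The second assertion is then a one-line computation. Since $w^{\rev}(T)$ is by definition the reverse of the reading word $w(T)$, properties (a), (c) and (b) give, in turn,
\[
P_{\mathrm{col}}(w^{\rev}(T))
=P_{\mathrm{row}}(w^{\rev}(T))^{t}
=\bigl(P_{\mathrm{row}}(w(T))^{t}\bigr)^{t}
=P_{\mathrm{row}}(w(T))
=T .
\]

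The bookkeeping behind (a)--(c) is routine; the only step demanding an argument specific to this lemma is the descent complementarity $\Des(U^{t})=\{1,\dots,r-1\}\setminus\Des(U)$. I expect that, together with pinning down the conventions for column bumping and for ``left of'' versus the inverse permutation, to be the main (though modest) obstacle.
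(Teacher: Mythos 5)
Your proposal is correct, but there is nothing in the paper to compare it against: the authors state Lemma~\ref{lem:RS-col-properties} without proof, treating it as a standard consequence of the Robinson--Schensted facts they cite elsewhere (Sagan's and Stanley's exercises, as in Lemma~\ref{lem:RSK-properties}). Your derivation supplies exactly the missing routine argument, and it checks out: the three ingredients $P_{\mathrm{col}}(u)=P_{\mathrm{row}}(u)^{t}$, $P_{\mathrm{row}}(w(T))=T$, and $P_{\mathrm{row}}(u^{\rev})=P_{\mathrm{row}}(u)^{t}$ are all classical, the descent complementation $\Des(U^{t})=\{1,\dots,r-1\}\setminus\Des(U)$ is argued correctly (the key point being that the box of $k+1$ cannot lie strictly south-east of the box of $k$), and the chain $\Des(T)=\Des(P_{\mathrm{row}}(\pi))^{c}=\Des(\pi^{-1})^{c}=\{k:\text{$k$ left of $k+1$ in $\pi$}\}$ via $P(\pi)=Q(\pi^{-1})$ and Sch\"utzenberger's theorem is sound. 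Two small points worth making explicit: properties (a) and (c) in the form you use them require all letters to be distinct (transposing a genuinely semistandard tableau is not semistandard), which holds here since $\pi$ is a permutation and $T$ is a partial Young tableau; and in the paper's application the lemma is invoked for words on an arbitrary set $A$ of distinct positive integers rather than on $\{1,\dots,r\}$, but your argument goes through verbatim after relabelling, with the descent set read off via Definition~\ref{sec:des-partial}.
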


\begin{proof}[Proof of Theorem~\ref{thm:descents}]
  By the first assertion of Lemma~\ref{lem:RS-col-properties} we have
  that $Q$ can also be obtained by column-inserting the concatenation
  of $w^{\rev}(T)$ and $w^{\rev}(I)$.  By the second assertion of
  Lemma~\ref{lem:RS-col-properties}, the descent set of $Q$ equals
  $\Des(I) \dotcup \Des(T) \dotcup \Des(T/I)$,
  which in turn equals the descent set of $\cT$ by
  Proposition~\ref{prop:Descents-2}.
\end{proof}


\section{Growth diagrams and Roby's description of $\Sun$}
\label{sec:Roby}

Using Fomin's framework of growth diagrams~\cite{Fomin1994,
  Fomin1995}, Roby~\cite{Roby1991} showed how to obtain the standard
tableau $Q$ directly from the oscillating tableau $\cT$.  However, he
omitted the proof that his construction does indeed correspond with
that of Sundaram's.  We take the opportunity here to review Roby's
construction and provide this proof.
Using his example Roby also attempted to obtain a direct description
of the $n$-symplectic Littlewood-Richardson tableau.  However, he
made the assumption that a permutation can be partitioned into
disjoint increasing subsequences whose lengths are given by the
lengths of the rows of the standard Young tableaux associated via the
Robinson-Schensted correspondence, which is not true in general.

\begin{figure}[h]
\[ \begin{tikzpicture}
\draw (0,0) node[anchor=north east]{$\lambda$} -- (0,1) node[anchor=south east]{$\mu$}
 -- (1,1) node[anchor=south west]{$\rho$} -- (1,0) node[anchor=north west]{$\nu$} -- cycle;
\draw (-.9,.5) node{$c=$};
\draw(2.2,.5) node{or};
\draw (4,0) node[anchor=north east]{$\lambda$} -- (4,1) node[anchor=south east]{$\mu$}
 -- (5,1) node[anchor=south west]{$\rho$} -- (5,0) node[anchor=north west]{$\nu$} -- cycle;
\draw (3.1,.5) node{$c=$};
\draw (4.5,.5) node{$X$};
\end{tikzpicture} \]
  \caption{A cell of a growth diagram.}
  \label{fig:growth-cell}
\end{figure}

A \Dfn{growth diagram} is a rectangular grid sitting inside the first
quadrant of a Cartesian coordinate system and composed of cells, $c$
as in Figure~\ref{fig:growth-cell}.  The four corners of $c$ are
labeled with partitions as indicated, and if two partitions are
connected by an edge (e.g., $\lambda$ and $\nu$ in
Figure~\ref{fig:growth-cell}) then the one closer to the origin
either equals or is covered (in Young's lattice) by the one further
away.  In addition, each cell is either empty or contains a cross,
$X$, where we insist that there is at most one cross in every column
and every row of the diagram.  There are certain local rules
according to which the cells are labelled which we explain below.  An
example for a growth diagram is depicted in Figure~\ref{fig:ex}.

In the following we use the notation $\lambda\ltd\mu$ to denote that
$\lambda$ is covered by $\mu$.  Furthermore, for an integer $i$ and a
partition $\lambda$ we denote by $\lambda+\epsilon_i$ the partition
obtained from $\lambda$ by adding one to the $i$th part, assuming
that $\lambda_i<\lambda_{i-1}$.  Similarly we define $\lambda-\epsilon_i$.

The \Dfn{forward rules} determine the partition $\rho$ given the
other three partitions and whether the cell has a cross or not.  They
are as follows, organized by the relationship between $\mu$ and
$\nu$, and then secondarily by their relationship to $\lambda$.

\begin{itemize}
\item[F1] If $\mu\neq\nu$, then let $\rho=\mu\cup\nu$.
\item[F2] If $\lambda\ltd\mu=\nu$ then we must have $\mu=\lambda+\epsilon_i$ for some $i$, so let $\rho=\mu+\epsilon_{i+1}$.
\item[F3] If $\lambda=\mu=\nu$, then let
  \[
  \rho=
  \begin{cases}
    \lambda &\text{if $c$ does not contain an $X$,}\\
    \lambda+\epsilon_1 &\text{if $c$ contains an $X$.}
  \end{cases}
  \]
\end{itemize}

Thus, given the partitions $\lambda$, $\mu$ and $\nu$ and knowing
whether the cell contains a cross or is empty, we can compute the
partition $\rho$.  An important fact is that this process is
invertible in the following sense: given the partitions $\mu$, $\nu$
and $\rho$ we can compute the partition $\lambda$ as well as the
contents of the cell by using the following \Dfn{backward rules}.

\begin{itemize}
\item[B1] If $\mu\neq\nu$, then let $\lambda=\mu\cap\nu$.
\item[B2] If $\rho\gtd\mu=\nu$ then we must have $\mu=\rho-\epsilon_i$ for some $i$, so let
  \[
  \lambda=
  \begin{cases}
    \mu &\text{if $i=1$,}\\
    \mu-\epsilon_{i-1} &\text{if $i\ge2$.}    
  \end{cases}
  \]
  where $c$ gets filled with an $X$ in the first case and is left empty in the second.
\item[B3] If $\rho=\mu=\nu$ then let $\lambda=\mu$.
\end{itemize}

One can visualize these rules as follows.  For rules F1 and B1 we have the following possibilities, where in the first diagram $i\neq j$,
\[
 \begin{tikzpicture}
\draw (0,0) node[anchor=north east]{$\lambda$} -- (0,1) node[anchor=south east]{$\lambda+\epsilon_i$}%
 -- (1,1) node[anchor=south west]{$\lambda+\epsilon_i+\epsilon_j$}%
 -- (1,0) node[anchor=north west]{$\lambda+\epsilon_j$} -- cycle;
\end{tikzpicture} 
\qquad
\begin{tikzpicture}
\draw (0,0) node[anchor=north east]{$\lambda$} -- (0,1) node[anchor=south east]{$\lambda+\epsilon_i$}%
 -- (1,1) node[anchor=south west]{$\lambda+\epsilon_i$}%
 -- (1,0) node[anchor=north west]{$\lambda$} -- cycle;
\end{tikzpicture}  
\qquad
 \begin{tikzpicture}
\draw (0,0) node[anchor=north east]{$\lambda$} -- (0,1) node[anchor=south east]{$\lambda$}%
 -- (1,1) node[anchor=south west]{$\lambda+\epsilon_j$}%
 -- (1,0) node[anchor=north west]{$\lambda+\epsilon_j$} -- cycle;
\end{tikzpicture} 
\]
For rules F2--F3 and B2--B3, we can draw the following diagrams
\[
 \begin{tikzpicture}
\draw (0,0) node[anchor=north east]{$\lambda$} -- (0,1) node[anchor=south east]{$\lambda+\epsilon_i$}%
 -- (1,1) node[anchor=south west]{$\lambda+\epsilon_i+\epsilon_{i+1}$}%
 -- (1,0) node[anchor=north west]{$\lambda+\epsilon_i$} -- cycle;
\end{tikzpicture} 
\qquad
 \begin{tikzpicture}
\draw (0,0) node[anchor=north east]{$\lambda$} -- (0,1) node[anchor=south east]{$\lambda$}%
 -- (1,1) node[anchor=south west]{$\lambda$} -- (1,0) node[anchor=north west]{$\lambda$} -- cycle;
\end{tikzpicture} 
\qquad
 \begin{tikzpicture}
\draw (0,0) node[anchor=north east]{$\lambda$} -- (0,1) node[anchor=south east]{$\lambda$}%
 -- (1,1) node[anchor=south west]{$\lambda+\epsilon_1$}%
 -- (1,0) node[anchor=north west]{$\lambda$} -- cycle;
\draw (0.5,0.5) node{X};
\end{tikzpicture} 
\]

The local rules are, in fact, just another way to perform
Robinson-Schensted insertion.  To explain this, consider  sequences of partitions 
$\ep=(\ep_0,\ep_1,\ldots,\ep_r)$ where $\ep_0=\emptyset$ and, for $k>0$, $\ep_k$ either equals $\ep_{k-1}$ or is obtained from $\ep_{k-1}$ by adding a box. Such a sequence $\ep$ corresponds to a partial Young tableau $P$ in the same way that an oscillating tableau with all steps expansions corresponds to a standard Young tableau: if  $\ep_k=\ep_{k-1}$ then $k$ does not appear in $P$; and if $\ep_k$ is obtained by adding a box to $\ep_{k-1}$ then $k$ appears in $P$ in the box added. Note that if the border of any row in a growth diagram starts with the empty partition, then the sequence of diagrams along this border satisfies the conditions for such an $\epsilon$.
\begin{theorem}[\protect{see \cite[Theorem~7.13.5]{EC2}}]
  \label{thm:RS-Fomin}
  Consider a growth diagram consisting of a single row whose cell in
  column $x$ contains a cross and whose left border is labeled with empty partitions.
 If the lower and upper borders are identified with partial tableaux $P$ and $P'$, respectively, then $P'$ is obtained by row inserting $x$ into $P$.
\end{theorem}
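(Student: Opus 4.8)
The plan is to read the two borders as partition sequences and show, by induction on the columns from left to right, that the forward rules F1--F3 enact exactly the bumping process of row insertion. Write the bottom border as $\emptyset=\lambda^0,\lambda^1,\dots,\lambda^N$, so that $P$ is the partial tableau in which $v$ occupies the box $\lambda^v/\lambda^{v-1}$ whenever $\lambda^v\neq\lambda^{v-1}$, and likewise write the top border as $\mu^0,\mu^1,\dots,\mu^N$, from which $P'$ is read. The hypothesis that the left border is empty says $\mu^0=\lambda^0=\emptyset$, and the forward rules determine each $\mu^j$ from $\mu^{j-1},\lambda^{j-1},\lambda^j$ together with whether cell $j$ carries the cross. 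Because in a growth diagram a cross occurs only in a cell whose three known corners satisfy $\lambda=\mu=\nu$ (such configurations are produced only by rule B2 with $i=1$), the cross in column $x$ entails $\lambda^x=\lambda^{x-1}$; in tableau terms $x\notin P$, consistent with row inserting the value $x$ into $P$.

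First I would dispose of the columns left of the cross. An easy induction using F1 and F3 (no cross) gives $\mu^j=\lambda^j$ for all $j<x$: if $\mu^{j-1}=\lambda^{j-1}$, then either $\lambda^j=\lambda^{j-1}$ and F3 yields $\rho=\lambda^j$, or $\lambda^j\gtd\lambda^{j-1}$ and F1 yields $\rho=\mu^{j-1}\cup\lambda^j=\lambda^j$. Thus the entries $1,\dots,x-1$ sit in the same boxes of $P$ and $P'$, matching the fact that row insertion of $x$ never disturbs entries smaller than $x$. At the cross cell we now have $\lambda^{x-1}=\mu^{x-1}=\lambda^x$, so F3 with a cross gives $\mu^x=\lambda^x+\ep_1$: the insertion path starts with a box in the first row, exactly as $x$ enters row $1$.

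The heart of the argument is an invariant for the columns $j\ge x$: the top and bottom borders differ in a single box, $\mu^j=\lambda^j+\ep_{r(j)}$, where $r(j)$ is the row currently occupied by the element carried in the row insertion of $x$ into $P$. I would prove by induction that this difference box behaves correctly. If $\lambda^j=\lambda^{j-1}$, or if $P$ adds its box at value $j$ in a row other than $r(j-1)$, then $\mu^{j-1}\neq\nu=\lambda^j$ and rule F1 gives $\mu^j=\lambda^j+\ep_{r(j-1)}$, so the difference box stays in its row; this is the case of values not bumped at the current stage. If instead $P$ adds its box in row $r(j-1)$, then $\mu^{j-1}=\nu=\lambda^{j-1}+\ep_{r(j-1)}$ with $\lambda^{j-1}\ltd\mu^{j-1}$, so F2 applies and gives $\mu^j=\lambda^j+\ep_{r(j-1)+1}$, dropping the difference box by one row. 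The crucial point is that this drop happens at precisely the bumped values: since the entries of $P$ increase strictly along rows and columns, the first entry of row $r$ exceeding the element carried into that row is exactly the element row insertion bumps out of row $r$, and no lower-row entry intervenes before it.

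I expect the main obstacle to be exactly this last matching: verifying that the dichotomy ``$\mu=\nu$ versus $\mu\neq\nu$'' at a cell coincides with the dichotomy ``this value is bumped from the current row versus it is not,'' and that the partition produced by F2 is always legal (which follows from the built-in well-definedness of F2, since $\lambda_{r+1}\le\lambda_r<\lambda_r+1$ makes the box in row $r+1$ addable). Once the invariant holds for every $j$, the sequence of rows visited by the difference box, terminating in the new box where the last carried element comes to rest, is the insertion path of $x$ into $P$; reading $P'$ off the top border then recovers the row-insertion tableau box for box, which is the claim.
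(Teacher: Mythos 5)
Your proof is correct, but there is nothing in the paper to compare it against: the paper does not prove this theorem, it simply quotes it from Stanley (Theorem~7.13.5 of \cite{EC2}). What you have written is a correct, self-contained version of the standard argument: propagate the forward rules along the single row, note that $\mu^j=\lambda^j$ to the left of the cross, that the cross creates the difference box via $\mu^x=\lambda^x+\epsilon_1$, and that thereafter the single-box difference $\mu^j/\lambda^j$ records the row into which the currently carried element is being inserted, descending by one row exactly when rule F2 fires. The step you single out as the main obstacle is indeed the only delicate point, and your resolution is the right one: if at time $j-1$ the carried element is $c$ targeting row $r$, any entry of row $r$ of $P$ strictly between $c$ and $j$ would have triggered the bump at that earlier time, so an entry $j$ of $P$ sitting in row $r$ is forced to be the bumped element, while an entry in any other row leaves $\mu^{j-1}\neq\nu$ and rule F1 keeps the difference box in place. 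Two small points are worth making explicit if you write this up. First, the fact that a cross can only occupy a cell with $\lambda=\mu=\nu$ is a structural requirement of growth diagrams (rules F1 and F2 make no provision for a cross), so $\lambda^x=\lambda^{x-1}$, i.e.\ $x\notin P$, is part of the well-formedness hypothesis rather than something derived. Second, the concluding sentence ``recovers the row-insertion tableau box for box'' deserves one more line: in the bumping case the box $\mu^v/\mu^{v-1}$ sits at the end of row $r+1$ of the shape $\lambda^{v-1}$, which is exactly the cell of the smallest entry of row $r+1$ of $P$ exceeding $v$, i.e.\ where row insertion re-places the bumped $v$; and after the insertion path terminates no further entry of $P$ in the terminal row can exceed the settled element, so the difference box never moves again. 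With those remarks added, the argument is complete.
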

Note that, usually, this theorem is stated with the roles of
columns and rows interchanged.  However, this is without consequence
since the local rules are symmetric in this respect.

We can now explain Roby's description of Sundaram's correspondence.
As before, let $\cT=(\emptyset\!=\!\wgtCn_0,\wgtCn_1,\ldots,\wgtCn_r)$ be
an $n$-symplectic oscillating tableau.  We then proceed as follows.
\begin{enumerate}
\item[R1] Label the corners of the cells along the main diagonal,
  i.e., from north-west to south-east, of an $r\times r$ grid with
  the \emph{conjugates} of these partitions.
\item[R2] Using the rules B1--B4, determine the  partitions labelling the corners of the cells below the main diagonal and which of these cells contain a cross.  (Since neighboring partitions on
  the diagonal will always be distinct, rule B1 will always apply to
  determine the subdiagonal without needing to know $\rho$.)
\item[R3] Place crosses into those cells above the main diagonal,
  whose image under reflection about the main diagonal contains a
  cross.
\item[R4] Using the rules F1--F4, compute the partitions labelling
  the corners of the cells above the main diagonal.
\end{enumerate}

If we apply this procedure to our running example tableau,
\[
\cT=(\emptyset,1,11, 21, 2, 1, 2, 21,211,21),
\] 
then the corresponding oscillating tableau of conjugate (transposed)
partitions is
\[
\cT^t=(\emptyset,1,2,21,11,1,11,21,31,21)
\] 
and we obtain the growth diagram in Figure~\ref{fig:ex}.

\begin{figure}
\begin{tikzpicture}
\foreach \y in {1,...,10} 
   \draw (1,\y)--(10,\y);
\foreach \x in {1,2,...,10} 
   \draw (\x,1)--(\x,10);
\draw (1.4,5.4) node{$X$};
\draw (2.4,6.4) node{$X$};
\draw (8.4,1.4) node{$X$};
\draw (.7,9.7) node{$\emptyset$};
\draw (1.7,8.7) node{$1$};
\draw (2.7,7.7) node{$2$};
\draw (3.7,6.7) node{$21$};
\draw (4.7,5.7) node{$11$};
\draw (5.7,4.7) node{$1$};
\draw (6.7,3.7) node{$11$};
\draw (7.7,2.7) node{$21$};
\draw (8.7,1.7) node{$31$};
\draw (9.7,0.7) node{$21$};
\draw (.7,8.7) node{$\emptyset$};
\draw (1.7,7.7) node{$1$};
\draw (2.7,6.7) node{$2$};
\draw (3.7,5.7) node{$11$};
\draw (4.7,4.7) node{$1$};
\draw (5.7,3.7) node{$1$};
\draw (6.7,2.7) node{$11$};
\draw (7.7,1.7) node{$21$};
\draw (8.7,0.7) node{$21$};
\draw (.7,7.7) node{$\emptyset$};
\draw (1.7,6.7) node{$1$};
\draw (2.7,5.7) node{$1$};
\draw (3.7,4.7) node{$1$};
\draw (4.7,3.7) node{$1$};
\draw (5.7,2.7) node{$1$};
\draw (6.7,1.7) node{$11$};
\draw (7.7,0.7) node{$21$};
\draw (.7,6.7) node{$\emptyset$};
\draw (1.7,5.7) node{$1$};
\draw (2.7,4.7) node{$\emptyset$};
\draw (3.7,3.7) node{$1$};
\draw (4.7,2.7) node{$1$};
\draw (5.7,1.7) node{$1$};
\draw (6.7,0.7) node{$11$};
\draw (.7,5.7) node{$\emptyset$};
\draw (1.7,4.7) node{$\emptyset$};
\draw (2.7,3.7) node{$\emptyset$};
\draw (3.7,2.7) node{$1$};
\draw (4.7,1.7) node{$1$};
\draw (5.7,0.7) node{$1$};
\draw (.7,4.7) node{$\emptyset$};
\draw (1.7,3.7) node{$\emptyset$};
\draw (2.7,2.7) node{$\emptyset$};
\draw (3.7,1.7) node{$1$};
\draw (4.7,0.7) node{$1$};
\draw (.7,3.7) node{$\emptyset$};
\draw (1.7,2.7) node{$\emptyset$};
\draw (2.7,1.7) node{$\emptyset$};
\draw (3.7,0.7) node{$1$};
\draw (.7,2.7) node{$\emptyset$};
\draw (1.7,1.7) node{$\emptyset$};
\draw (2.7,0.7) node{$\emptyset$};
\draw (0.7,1.7) node{$\emptyset$};
\draw (1.7,0.7) node{$\emptyset$};
\draw (0.7,0.7) node{$\emptyset$};
\draw (4.4,8.4) node{$X$};
\draw (5.4,9.4) node{$X$};
\draw (9.4,2.4) node{$X$};
\draw (1.7,9.7) node{$1$};
\draw (2.7,8.7) node{$2$};
\draw (3.7,7.7) node{$21$};
\draw (4.7,6.7) node{$21$};
\draw (5.7,5.7) node{$11$};
\draw (6.7,4.7) node{$11$};
\draw (7.7,3.7) node{$21$};
\draw (8.7,2.7) node{$31$};
\draw (9.7,1.7) node{$31$};
\draw (2.7,9.7) node{$2$};
\draw (3.7,8.7) node{$21$};
\draw (4.7,7.7) node{$21$};
\draw (5.7,6.7) node{$21$};
\draw (6.7,5.7) node{$111$};
\draw (7.7,4.7) node{$21$};
\draw (8.7,3.7) node{$31$};
\draw (9.7,2.7) node{$41$};
\draw (3.7,9.7) node{$21$};
\draw (4.7,8.7) node{$31$};
\draw (5.7,7.7) node{$21$};
\draw (6.7,6.7) node{$211$};
\draw (7.7,5.7) node{$211$};
\draw (8.7,4.7) node{$31$};
\draw (9.7,3.7) node{$41$};
\draw (4.7,9.7) node{$31$};
\draw (5.7,8.7) node{$31$};
\draw (6.7,7.7) node{$211$};
\draw (7.7,6.7) node{$221$};
\draw (8.7,5.7) node{$311$};
\draw (9.7,4.7) node{$41$};
\draw (5.7,9.7) node{$41$};
\draw (6.7,8.7) node{$311$};
\draw (7.7,7.7) node{$221$};
\draw (8.7,6.7) node{$321$};
\draw (9.7,5.7) node{$411$};
\draw (6.7,9.7) node{$411$};
\draw (7.7,8.7) node{$321$};
\draw (8.7,7.7) node{$321$};
\draw (9.7,6.7) node{$421$};
\draw (7.7,9.7) node{$421$};
\draw (8.7,8.7) node{$331$};
\draw (9.7,7.7) node{$421$};
\draw (8.7,9.7) node{$431$};
\draw (9.7,8.7) node{$431$};
\draw (9.7,9.7) node{$441$};
\end{tikzpicture}
 \caption{The example growth diagram.}
  \label{fig:ex}
\end{figure}

Note that the partitions labelling the corners along the left border
must all be empty, since $\mu_0$, the first partition of the
oscillating tableau, is the empty partition.   So each row can be associated with a partial Young tableau.
  Let us single out two particular sequences of
partitions: $(\emptyset\!=\!\kappa_0,\kappa_1,\dots,\kappa_r)$,
labelling the corners along the upper border of the growth diagram,
and $(\emptyset\!=\!\tau_0,\tau_1,\dots,\tau_r)$ labelling the
corners along the lower border of the growth diagram.  In our running
example these are
\[
(\emptyset,1,2,21,31,41,411,421,431,441)
\]
and
\[
(\emptyset,\emptyset,\emptyset,1,1,1,11,21,21,21)
\]
respectively.

Now let
\begin{itemize}
\item $A_{\Rob}$ be the set of column indices of the growth diagram
  that contain a cross where indices are taken as in a matrix,
\item $\iota_{\Rob}$ be the involution of $A_{\Rob}$ that exchanges
  column and row indices of the cells containing crosses,
\item $T_{\Rob}$ be the partial Young tableau defined by
  $(\emptyset\!=\!\tau_0^t,\dots,\tau_r^t)$, and
\item $Q_{\Rob}$ be the partial Young tableau defined by
  $(\emptyset\!=\!\kappa_0^t,\dots,\kappa_r^t)$.
\end{itemize}
In our example, $A_{\Rob}=\{1,2,4,5,8,9\}$,
$\iota_{\Rob}=(1,5)(2,4)(8,9)$,
\[
\Yvcentermath1%
T_{\Rob}=\young(36,7)\quad\text{and}\quad Q_{\Rob}=\young(136,27,48,59).
\]

We now compute a second growth diagram.  Consider a square grid
having crosses in the same places as determined before.  Then, label
the corners along the lower and the left border of the grid with the
empty partition.  Finally, using the rules F1--F4 compute the
sequence of partitions $(\emptyset\!=\!\nu_0,\nu_1,\dots,\nu_r)$,
labelling the corners along the upper border of the growth diagram.
Let $I_{\Rob}$ be the partial Young tableau defined by
$(\emptyset\!=\!\nu_0^t,\dots,\nu_r^t)$.  

In our example, this sequence is $(\emptyset,
1,2,2,3,4,4,4,41,42)$ with associated tableau 
\[
\Yvcentermath1%
I_{\Rob}=\young(18,29,4,5).
\]

Comparing these objects with the ones computed using Sundaram's
correspondence, the reader will have anticipated the following
theorem.
\begin{theorem}\label{thm:Roby-Sundaram}
If
\[
\cT\stackrel{\Sun_1}{\mapsto} (\iota,T)\stackrel{RS}\mapsto (I,T) \stackrel{\Sun_2}{\mapsto} (Q,S)
\]
with $\iota$ an involution on $A$, then $A_{\Rob}=A$,
$\iota_{\Rob}=\iota$, $I_{\Rob}=I$, $T_{Rob}=T$ and $Q_{\Rob}=Q$
\end{theorem}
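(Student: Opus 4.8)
The plan is to decompose Roby's construction into the same three stages as $\Sun$ itself---the lower triangle below the main diagonal (which I claim realizes $\Sun_1$), the auxiliary diagram with empty borders (which realizes the Robinson--Schensted map applied to $\iota$), and the upper triangle above the main diagonal (which realizes $\Sun_2$)---and to prove that each stage produces the asserted object. I would prove the first and last stages by induction on the length $r$, using as the engine the identification of the local growth rules with Robinson--Schensted insertion, Theorem~\ref{thm:RS-Fomin}, together with its deletion analogue obtained by running the backward rules B1--B3.

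First I would treat the lower triangle and show $A_{\Rob}=A$, $\iota_{\Rob}=\iota$ and $T_{\Rob}=T$. The diagonal corners carry the conjugate oscillating tableau $\cT^t$, and since consecutive diagonal partitions differ, rule B1 alone decodes the subdiagonal region, one antidiagonal per diagonal corner. The key lemma is that decoding the single new antidiagonal introduced by the $k$-th diagonal corner performs exactly the $k$-th step of $\Sun_1$: if step $k$ of $\cT$ is an expansion (so $\wgtCn^{kt}\gtd\wgtCn^{(k-1)t}$) then no cross is produced in row $k$ and the entry $k$ is adjoined to the lower-border tableau in box $b_k$, matching the insertion branch of $\Sun_1$; if step $k$ is a contraction then a cross appears in row $k$ at some column $x$, the value $x$ being precisely the letter column-deleted from $T_{k-1}$, so that the transposition $(x,k)$ is adjoined to the involution. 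Because the lower border is identified (after conjugation) with the partial tableau $T_k$, this is an instance of the column-deletion form of Theorem~\ref{thm:RS-Fomin}, read along the border in the sense explained just before that theorem. Carrying the induction through yields that the final lower border equals $T$ and that the crosses encode $\iota$, giving $A_{\Rob}=A$ and $\iota_{\Rob}=\iota$.

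Next I would dispatch $I_{\Rob}=I$. The auxiliary diagram has empty left and bottom borders and crosses at the positions $(j,\iota(j))$, so by Theorem~\ref{thm:RS-Fomin} applied row by row its top border computes the insertion tableau of the permutation read off from the crosses. That permutation is the involution $\iota$, whose two Robinson--Schensted tableaux coincide by Lemma~\ref{lem:RSK-properties}; conjugating the top border then gives $I_{\Rob}=I$. Finally, for $Q_{\Rob}=Q$ I would analyze the upper triangle. Here the forward rules F1--F3 propagate the diagonal data outward, and by the same local-rule identification this computes the effect of column-inserting the reverse reading word $w^{\rev}(I)$ into the tableau recorded on the lower border, namely $T$; this is exactly the definition of $\Sun_2$, so the top border $\kappa$, conjugated, equals $Q$.

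The main obstacle is the single-sweep lemma underlying the first and third stages: one must show that a single pass of the local rules along a newly added border reproduces precisely one column-insertion or one column-deletion of the sequential $\Sun$ algorithm. The difficulty is that the main diagonal is labeled by an oscillating---hence non-monotone---sequence $\cT^t$, so Theorem~\ref{thm:RS-Fomin}, which is stated for a strictly growing border, does not apply verbatim; each expansion must be handled by insertion and each contraction by the backward (deletion) rules, and one must verify that the cross produced by a contraction records exactly the bumped letter $x$. Once this local compatibility is established, the three stages assemble into the five claimed identities.
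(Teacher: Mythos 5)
Your proposal follows essentially the same route as the paper: an induction on $r$ identifying each backward-rule sweep below the diagonal with one step of $\Sun_1$ (using Theorem~\ref{thm:RS-Fomin} to recognize the contraction case as a column deletion whose ejected letter is the cross's column index), a direct application of Theorem~\ref{thm:RS-Fomin} to the auxiliary diagram for $I_{\Rob}=I$, and the identification of the passage from the lower border $T^t$ to the upper border with the insertion defining $\Sun_2$. The one point you gloss over is that the word the local rules actually insert, read off from the crosses row by row, is the reversal of $\iota$ in one-line notation rather than $w^{\rev}(I)$ itself; the paper bridges this with the observation that the two words are Knuth equivalent.
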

As mentioned at the beginning of this section it appears that there
is no straightforward way to extract the skew Littlewood-Richardson
tableau $S$ from the growth diagram.

For the proof the following simple observation will be helpful.
\begin{lemma}
  Suppose that the left border of a growth diagram is labelled by
  empty partitions and let $c$ be a cell labelled with partitions as
  in Figure~\ref{fig:growth-cell}.  Then $\lambda=\mu$ if and only if
  none of the cells to the left of $c$ and in the same row as $c$
  contains a cross.
\end{lemma}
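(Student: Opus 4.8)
The plan is to reduce the statement to a purely local fact about a single cell and then induct across the row, starting from the empty left border. Call the left edge of a cell \emph{flat} if $\lambda=\mu$, i.e. no box is added going up from the south-west corner to the north-west corner, and likewise call its right edge flat if $\nu=\rho$. Since the left border of the diagram is labelled with empty partitions, the left edge of the leftmost cell of the row is flat. Reading the cells of the row from left to right, the left edge of the cell in column $j$ coincides with the right edge of the cell in column $j-1$, so it suffices to understand how flatness of the left edge of a single cell, together with the presence or absence of a cross, governs flatness of its right edge.

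Thus the key step is the following per-cell observation: for a cell labelled as in Figure~\ref{fig:growth-cell}, the right edge is flat if and only if the left edge is flat and the cell contains no cross. I would verify this by running through the forward rules, first noting that a cross occurs only in a cell with $\lambda=\mu=\nu$: by rule B2 a cross is inserted exactly in the case $\rho=\mu+\epsilon_1$ (equivalently $\lambda=\mu=\nu$), which is also the only configuration in which a cross is drawn in the rule diagrams. Hence a cross cell is always governed by F3, where $\rho=\lambda$ without a cross and $\rho=\lambda+\epsilon_1$ with a cross; since here $\mu=\nu=\lambda$, the left edge is flat and ``right edge flat $\iff$ no cross'' holds. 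For the crossless cells one checks the remaining rules: in F2 we have $\mu=\nu=\lambda+\epsilon_i\neq\lambda$, so the left edge is not flat and $\rho=\mu+\epsilon_{i+1}\neq\nu$, so the right edge is not flat; in F1 we have $\rho=\mu\cup\nu$, and since $\lambda\subseteq\mu$ and $\lambda\subseteq\nu$, we get $\rho=\nu$ precisely when $\mu\subseteq\nu$, which for $\mu\neq\nu$ forces $\mu=\lambda$ (left edge flat) and $\nu=\lambda+\epsilon_j$, while in the complementary subcase $\mu\neq\lambda$ makes both edges non-flat. In every case the right edge is flat exactly when the left edge is flat and there is no cross.

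Granting the observation, the lemma follows by induction on the column of $c$: the left edge of the cell in column $j$ is flat if and only if the left edge of the cell in column $j-1$ is flat and that cell carries no cross, hence, unwinding the induction back to the flat left border, if and only if none of the cells in columns $1,\dots,j-1$ of the row contains a cross. Taking $c$ to be the cell in column $j$, this says exactly that $\lambda=\mu$ iff no cell to the left of $c$ in its row contains a cross. The one delicate point, and the one I would be most careful about, is the claim that crosses sit only in $\lambda=\mu=\nu$ cells; this is what prevents a cross from appearing in an F1 or F2 cell (where the rules simply ignore it) and is what makes the vertical growth ``switch on'' exactly at the cross. Alternatively, one can bypass the case analysis by appealing to Theorem~\ref{thm:RS-Fomin}: the row realises the upper tableau as the row insertion of the cross column $x$ into the lower tableau, and since insertion leaves every entry smaller than $x$ in place, the shapes of the restrictions to entries $\le m$ agree for $m<x$ and differ by a single box for $m\ge x$; the vertical edge at horizontal position $m$ is therefore flat exactly for $m<x$, which yields the same conclusion.
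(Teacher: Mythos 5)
Your proof is correct. The paper in fact gives no proof of this lemma (it is dismissed as a ``simple observation''), so there is nothing to compare against; your row-by-row induction, resting on the per-cell claim that $\nu=\rho$ holds exactly when $\lambda=\mu$ and the cell is empty, is the natural argument and the case analysis of rules F1--F3 checks out. You also correctly isolate and justify the one genuinely delicate point, namely that a cross can only sit in a cell with $\lambda=\mu=\nu$ (forced by the consistency of the forward rules with rule B2, the only backward rule that ever produces a cross), without which the F1 case would break.
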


\begin{proof}[Proof of Theorem~\ref{thm:Roby-Sundaram}]
  Let us first show that $A_{\Rob}=A$, $\iota_{\Rob}=\iota$ and
  $T_{\Rob}=T$.  We will use induction on the length $r$ of the
  oscillating tableau $\cT$.  There is nothing to show if $\cT$ is
  empty.  Thus, suppose that $r>0$ and that $\cT$ with the last step
  removed is mapped to the pair $(\iota^\prime, T^\prime)$ where
  $\iota'$ is an involution on $A'$.

  If the last step of $\cT=(\mu_0,\ldots,\mu_r)$ is an expansion,
  then we claim that the bottom row of the growth diagram does not
  contain a cross.  Since $\mu_{r-1}\ltd\mu_r$ the partition
  $\tau_{r-1}$ is obtained by applying rule B1 and we have
  $\tau_{r-1}=\mu_{r-1}$.  Now the claim follows from the lemma.

  We now have that $A_{\Rob}=A'$ and $\iota_{\Rob} = \iota' = \iota$
  in agreement with Sundaram's first correspondence $\Sun_1$.
  Moreover, since we have that  $\tau_{r-1}=\mu_{r-1}\ltd\mu_r=\tau_r$, $T_{\Rob}$ is obtained from
  $T'$ by putting $r$ into the cell added by the expansion, which
  coincides with $T$ as constructed via $\Sun_1$.

  If the last step of $\cT$ is a contraction, again using the lemma,
  the rule B1 entails that there must be a cross in the bottom row of
  the growth diagram, say in column $x$.  Thus, $\iota_{\Rob}$ is
  obtained from $\iota'$ by adjoining the pair $(x, r)$.  It remains
  to show that $T_{\Rob}$ is obtained by a column deletion of $T'$ where $x$ is the element removed at the end of the deletion process.  Thus we must show that column inserting $x$ into $T$
  yields $T'$, or equivalently, that row inserting $x$  into $T^t$ yields  $(T')^t$.  This last statement follows immediately from Theorem~\ref{thm:RS-Fomin}.

  We now turn to the computation of $Q$ and $Q_{\Rob}$.  Using
  $\Sun_2$ the tableau $Q$ is obtained by column inserting the
  reverse reading word of $I$ into $T$.  Equivalently, $Q^t$ is
  obtained by row inserting $w^{\rev}(I)$ into $T^t$.  Because
  $w^{rev}(I)$ is Knuth equivalent to the reversal of the involution
  $\iota$ in one line notation, Theorem~\ref{thm:RS-Fomin} shows that
  the tableau described by the partitions along the upper border of
  the growth diagram is indeed $Q^t$.

  The equality of $I$ and $I_{\Rob}$ also follows directly from
  Theorem~\ref{thm:RS-Fomin}.
\end{proof}

One nice feature of the growth diagram formulation of Sundaram's
correspondence is that the descent set can be visualized.  Namely,
take \emph{any} partial permutation with insertion tableau $T^t$
and construct its growth diagram.  On top of this stack the growth
diagram for the oscillating tableau so that the two rows corresponding to $T^t$ coincide.  Then $k$ is a descent in the
oscillating tableau if and only if the cross in column $k$ is lower than the cross in column $k+1$.
 It is in fact possible to prove Theorem~\ref{thm:descents}
using this approach.

\providecommand{\cocoa} {\mbox{\rm C\kern-.13em o\kern-.07em C\kern-.13em
  o\kern-.15em A}}\def\cprime{$'$}
\providecommand{\bysame}{\leavevmode\hbox to3em{\hrulefill}\thinspace}
\providecommand{\MR}{\relax\ifhmode\unskip\space\fi MR }
\providecommand{\MRhref}[2]{%
  \href{http://www.ams.org/mathscinet-getitem?mr=#1}{#2}
}
\providecommand{\href}[2]{#2}

\end{document}